\setlist[enumerate,1]{label = \normalfont(\roman*), ref = (\roman*)}
\newtheorem{theorem}{Theorem}[section]
\newtheorem*{theorem*}{Theorem}
\newtheorem{lemma}[theorem]{Lemma}
\newtheorem{proposition}[theorem]{Proposition}
\newtheorem{corollary}[theorem]{Corollary}
\newtheorem{problem}{Problem}
\newenvironment{assumption}[1]
 {\taggedtheoremx}
 {\endtaggedtheoremx}
\theoremstyle{definition}
\newtheorem{definition}[theorem]{Definition}
\newtheorem{remark}[theorem]{Remark}
\newtheorem{example}[theorem]{Example}
\newcommand{\R}{\mathbb{R}}
\newcommand{\C}{\mathbb{C}}
\renewcommand{\L}{\mathrm{L}}
\newcommand{\W}{\mathrm{W}}
\newcommand{\Harm}{\mathcal{H}}
\newcommand{\Cont}{\mathrm{C}}
\newcommand{\N}{N_0} %
\newcommand{\Ns}{N} %
\newcommand{\MO}{\mathrm{M}}
\newcommand{\Avg}[2]{\mathrm{Avg}_{#1,#2}}
\newcommand{\e}{\mathrm{e}}
\renewcommand{\d}{\,\mathrm{d}}
\newcommand{\eps}{\varepsilon}
\newcommand{\B}{\mathrm{B}}
\renewcommand{\H}{\mathrm{H}}
\newcommand{\Sec}{\mathrm{S}}
\newcommand{\cL}{\mathcal{L}}
\newcommand{\SP}{\, |\,}
\newcommand{\argdot}{\, \cdot \,}
\renewcommand\Re{\operatorname{Re}}
\renewcommand\Im{\operatorname{Im}}
\newcommand{\ind}{\mathbf{1}}
\newcommand{\fint}{\barint}
\newcommand{\ip}[3]{[ #1 , #2]_{ #3 }} %
\newcommand{\fra}{\mathfrak{a}}
\newcommand{\frb}{\mathfrak{b}}
\DeclareMathOperator{\dist}{d}
\DeclareMathOperator{\diam}{diam}
\DeclareMathOperator{\dom}{D}
\DeclareMathOperator{\tr}{Tr}
\DeclareMathOperator{\Ext}{E}
\def\Xint#1{\mathchoice
{\XXint\displaystyle\textstyle{#1}}%
{\XXint\textstyle\scriptstyle{#1}}%
{\XXint\scriptstyle\scriptscriptstyle{#1}}%
{\XXint\scriptscriptstyle%
\scriptscriptstyle{#1}}%
\!\int}
\def\XXint#1#2#3{{\setbox0=\hbox{$#1{#2#3}{%
\int}$ }
\vcenter{\hbox{$#2#3$ }}\kern-.6\wd0}}
\def\barint{\,\Xint-} %
\title[Off-diagonal bounds for the Dirichlet--to--Neumann operator]{Off-diagonal bounds for the Dirichlet--to--Neumann operator on Lipschitz domains}
\author{Sebastian Bechtel}
\author{El Maati Ouhabaz}
\address{Institut de Math\'{e}matiques de Bordeaux, Universit\'{e} Bordeaux, UMR CNRS 5251, 351 Cours de la Lib\'{e}ration 33405, Talence, France}
\email{sebastian.bechtel@math.u-bordeaux.fr}
\email{elmaati.ouhabaz@math.u-bordeaux.fr}
\subjclass[2020]{Primary: 35J67. Secondary: 47F10, 47D06.}
\dedicatory{}
\keywords{Dirichlet--to--Neumann operators, elliptic systems of second order, off-diagonal estimates, Lipschitz domains, commutator estimates, maximal regularity.}
\begin{document}
\begin{abstract}
Let $\Omega$ be a bounded  domain of $\R^{n+1}$ with $n \ge 1$. We assume that the boundary $\Gamma$ of $\Omega$ is Lipschitz. Consider the Dirichlet--to--Neumann operator $\N$ associated with  a system in divergence form of size  $m$ with real symmetric and H\"older continuous coefficients. We prove  $\L^p(\Gamma)\to \L^q(\Gamma)$ off-diagonal bounds of the form
$$ \| \ind_F \e^{-t \N} \ind_E f \|_q \lesssim (t \wedge 1)^{\nicefrac{n}{q}-\nicefrac{n}{p}} \left( 1 + \frac{\dist(E,F)}{t} \right)^{-1} \| \ind_E f \|_p$$
for all measurable subsets $E$ and $F$ of $\Gamma$. If $\Gamma$ is $\Cont^{1+ \kappa}$ for some $\kappa > 0$ and $m=1$,  we obtain a sharp estimate in the sense that  $ \left( 1 + \frac{\dist(E,F)}{t} \right)^{-1}$ can be  replaced by
$ \left( 1 + \frac{\dist(E,F)}{t} \right)^{-(1 + \nicefrac{n}{p} - \nicefrac{n}{q})}$. Such bounds are also valid for complex time. For $n=1$, we apply  our off-diagonal bounds to prove that the Dirichlet--to--Neumann operator associated with a  system generates an analytic semigroup on $\L^p(\Gamma)$ for all $p \in (1, \infty)$. In addition, the corresponding evolution problem has
$\L^q(\L^p)$-maximal regularity.

\end{abstract}
\maketitle
\section{Introduction}
\label{Sec: Introduction}

Let $\Omega$ be a bounded Lipschitz domain of $\R^d$ for some $d \ge 2$. Denote by $\Gamma$ its boundary.
We consider an \emph{elliptic system} $\cL$ of size $m\geq 1$ with bounded coefficients $A^{\alpha\beta} \colon \Omega \to \C^{d\times d}$ for $\alpha,\beta=1,\dots,m$. We assume the usual ellipticity condition: There exists $\lambda>0$ such that
\begin{align}
	\Re \sum_{\alpha,\beta=1}^m  A^{\alpha\beta}(x) \xi^\alpha \cdot \overline{\xi^\beta} \geq \lambda |\xi|^2 \qquad (x \in \Omega, (\xi^\alpha)_{\alpha=1}^m = \xi \in \C^{d m}).
\end{align}
We define formally the associated Dirichlet--to--Neumann operator $\N$ on $\H^{\nicefrac{1}{2}}(\Gamma; \C^m)$ as follows (see~\cite{McLean,Necas} for a proper definition of the space $\H^{\nicefrac{1}{2}}(\Gamma; \C^m)$). For $f = (f^\alpha)_\alpha \in \H^{\nicefrac{1}{2}}(\Gamma; \C^m)$, one solves the Dirichlet problem
\begin{align}
\label{Eq: ellipticity-00}
	\cL u = 0 \quad{\rm on }\ \Omega, \quad u = f \quad{\rm  on }\ \Gamma,
\end{align}
with a unique $u \in \W^{1,2}(\Omega; \C^m)$, and then sets
\begin{align}
\label{Eq: ellipticity-01}
	(\N f)^{\alpha} = \sum_{i,j=1}^d \sum_{\beta=1}^m A^{\alpha, \beta}_{ij} \frac{\partial u^{\beta}}{\partial x_j} n_i,
\end{align}
where $(n_i)_i$ denotes the outer normal to $\Omega$. The Dirichlet--to--Neumann operator plays a fundamental  role in many topics such as Calder\'on's inverse problem,
homogenization problems of elliptic systems with oscillating coefficients or spectral theory.  We do not  aim to give a detailed account on these and instead refer the reader to
the survey paper by  Uhlmann \cite{Uhlmann} for Calder\'on's inverse problem, to Kenig, Lin and Shen \cite{Kenig} for homogenization, and  to  Friedlander \cite{Friedlander} or Arendt and Mazzeo ~\cite{Arendt-Mazzeo} for the use of the Dirichlet--to--Neumann operator in comparison of eigenvalues of Dirichlet and Neumann Laplacians.

Many other questions concerning  the Dirichlet--to--Neumann operator have emerged in recent years. This concerns, among other problems, possible extrapolation  of the semigroup $\{\e^{-t\N}\}_{t \geq 0}$ from  $\L^2(\Gamma)$ to  $\L^p(\Gamma)$ for some $p \not= 2$, $\L^p \to \L^q$ smoothing properties, qualitative estimates for the corresponding heat kernel, and many more.  In the scalar case, that is to say $m=1$, with real coefficients
$A^{\alpha \beta}_{ij} = A_{ij}$, the semigroup $\{\e^{-t\N}\}_{t\ge0}$ is Markovian and hence extrapolates to a semigroup on $\L^p(\Gamma)$ for all $p \in [1, \infty]$, which is strongly continuous for $p \in [1, \infty)$. This fact, together with Sobolev embeddings, imply $\L^p(\Gamma) \to \L^q(\Gamma)$ bounds for all $1 \le p \le q \le \infty$. For all this, we refer to ter Elst and Ouhabaz \cite{PoissonBounds}. To the contrary, if one considers the case of complex coefficients or the case of a real system of size $m \geq 2$, Sobolev embeddings still provide  $\L^p \to \L^q$  bounds for appropriate $p$ and $q$  close to $2$, but no $\L^p$-extrapolation results for the semigroup $\{\e^{-t\N}\}_{t\ge0}$ are available in the literature.

In this paper, we will take some first steps in closing this gap. To this end, we upgrade $\L^p \to \L^q$ bounds to so-called off-diagonal estimates in the spirit of Davies--Gaffney. To be more precise, we show $\L^p \to \L^q$  bounds for $\ind_F \e^{-t \N}\ind_E$ in term of $t$ and the distance between given subsets $E$ and $F$ of $\Gamma$. Eventually, these allow us to extrapolate the semigroup $\{\e^{-t\N}\}_{t\ge0}$ to $\L^p$ spaces in some special cases. It turns out that, even for scalar equations with real coefficients, the question of off-diagonal estimates is delicate when the domain is merely Lipschitz. The reason is that no qualitative pointwise bounds for the associated heat kernel are available in this setting.
For systems with real, symmetric and H\"older continuous coefficients, we prove the following.
\begin{theorem*}[Off-diagonal estimates -- Lipschitz domain]
 Put $s = \nicefrac{2n}{n-1}$. For $1< p \le 2 \le q <\infty$ such that $s' \le p$ and $q\le s$ one has
\[ \| \ind_F \e^{-t \N} \ind_E f \|_q \lesssim (t \wedge 1)^{\nicefrac{n}{q}-\nicefrac{n}{p}} \left( 1 + \frac{\dist(E,F)}{t} \right)^{-1} \| \ind_E f \|_p.
\]
\end{theorem*}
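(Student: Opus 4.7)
The strategy is to decouple the two factors on the right-hand side: extract the spatial decay $(1+\dist(E,F)/t)^{-1}$ from an $\L^2\to\L^2$ off-diagonal estimate, the smoothing prefactor from single-scale $\L^p\to\L^q$ bounds, and combine them by the semigroup property.

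For the $\L^2$ off-diagonal estimate, I would employ the classical commutator trick of Davies adapted to the boundary. Fix a tangentially Lipschitz cutoff $\phi\colon \Gamma\to[0,1]$ with $\phi\equiv 1$ on $F$, $\phi\equiv 0$ on $E$, and $\|\nabla_\tau\phi\|_\infty \lesssim \dist(E,F)^{-1}$. Since $\phi\ind_E=0$ and $\ind_F\phi=\ind_F$,
\[
\ind_F\e^{-t\N}\ind_E = \ind_F[\phi,\e^{-t\N}]\ind_E = \ind_F\int_0^t \e^{-(t-s)\N}\,[\N,\phi]\,\e^{-s\N}\,\d s\,\ind_E.
\]
Combined with $\L^2$-contractivity of $\{\e^{-t\N}\}$ (from self-adjointness, guaranteed by the symmetry of $A^{\alpha\beta}$) and the key commutator estimate $\|[\N,\phi]\|_{\L^2\to\L^2}\lesssim\|\nabla_\tau\phi\|_\infty$, this yields $\|\ind_F\e^{-t\N}\ind_E\|_{\L^2\to\L^2}\lesssim t/\dist(E,F)$, which together with the trivial bound $\le 1$ gives the desired $\L^2$ decay.

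The commutator bound is the main technical obstacle. On smooth domains and with smooth coefficients it follows immediately from the order of $\N$ as a pseudodifferential operator, but here $\Gamma$ is only Lipschitz and the coefficients only H\"older. I would instead work directly with the harmonic extension $u$ of $f$: writing $\N(\phi f)-\phi\,\N f$ as a conormal derivative involving the extension of $\phi f$ minus $\phi u$, one controls it by Caccioppoli-type energy estimates in a one-sided tubular neighbourhood of $\Gamma$ together with trace inequalities. The H\"older regularity of $A^{\alpha\beta}$ is what secures the necessary boundary regularity of these extensions, and this is where I expect the bulk of the preliminary machinery of the paper to enter.

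To upgrade to $\L^p\to\L^q$, I would factor $\e^{-t\N}$ into three pieces of length $t/3$ and insert indicators of a thickened neighbourhood $G$ with $\dist(E,G)\asymp\dist(G^c,F)\asymp\dist(E,F)$. The outer pieces supply $\L^p\to\L^2$ and $\L^2\to\L^q$ smoothing of order $(t\wedge 1)^{\nicefrac{n}{2}-\nicefrac{n}{p}}$ and $(t\wedge 1)^{\nicefrac{n}{q}-\nicefrac{n}{2}}$ respectively, coming from the Sobolev embedding $\H^{\nicefrac{1}{2}}(\Gamma)\hookrightarrow\L^s(\Gamma)$ (and its dual) applied to $\e^{-\tau\N}$ viewed as mapping into the form domain; this is exactly what imposes the constraints $s'\le p\le 2\le q\le s$. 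The middle piece supplies the spatial decay via the $\L^2$ off-diagonal bound just established. The cap $t\wedge 1$ is handled by running the smoothing on time scale $1$ and absorbing the long-time evolution into the constant using uniform $\L^2$ bounds for $t\ge 1$.
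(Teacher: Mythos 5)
Your $\L^2$ off-diagonal estimate is fine (it is essentially Davies's trick, and the key commutator bound is Shen's theorem rather than something to re-derive), but the composition step has a genuine gap. If you factor $\e^{-t\N}$ into three pieces and insert $\ind_G$ and $\ind_{G^c}$, the cross terms cannot be handled: consider, say, $\ind_F\e^{-t\N/3}\ind_G\,\e^{-t\N/3}\ind_{G^c}\,\e^{-t\N/3}\ind_E$. You must apply $\L^p\to\L^2$ smoothing to the rightmost factor to arrive at the $\|\ind_E f\|_p$ norm, but the only off-diagonal information available is in the $\L^2\to\L^2$ topology, and it lives in that same rightmost factor (since $\dist(G^c,E)\gtrsim\dist(E,F)$); you cannot invoke both at once, and you have no $\L^p\to\L^2$ or $\L^2\to\L^q$ estimate carrying any spatial decay. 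Riesz--Thorin interpolation between the $\L^2$ order-$1$ decay and the decay-free $\L^p\to\L^2$ bound only yields $\L^p\to\L^2$ off-diagonal estimates of some order $\theta<1$, so this route degrades the decay and cannot recover the claimed order~$1$.

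The repair is not to decouple the two ingredients but to run your own commutator--Duhamel identity directly at the $\L^p\to\L^q$ level, as the paper does: writing $\ind_F\e^{-t\N}\ind_E=\ind_F[\phi,\e^{-t\N}]\ind_E=\ind_F\int_0^t\e^{-(t-s)\N}[\N,\phi]\e^{-s\N}\ind_E\,\d s$, estimate the outer factor $\L^r\to\L^q$, the commutator on $\L^r$ (for you $r=2$), and the inner factor $\L^p\to\L^r$, so that the time integral produces $t\,(t\wedge D)^{\nicefrac{n}{q}-\nicefrac{n}{p}}$ while $\|\nabla\phi\|_\infty\lesssim\dist(E,F)^{-1}$ supplies the $t/\dist(E,F)$ factor; combining with the decay-free $\L^p\to\L^q$ bound gives $\min(1,t/\dist(E,F))\,(t\wedge D)^{\nicefrac{n}{q}-\nicefrac{n}{p}}$ in one stroke. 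This is exactly the role of the conditions $\nicefrac{1}{r_*}>\nicefrac{1}{p}$ and $\nicefrac{1}{q}>\nicefrac{1}{r^*}$, which make the Beta-type integral converge. The paper realizes the same mechanism via the multiplicative weight $(1+\alpha g)^{-1}\e^{-t\N}(1+\alpha g)$ with $g=\dist(\cdot,E)$ and $\alpha=1/t$, but your cutoff $\phi$ serves just as well once the estimate is carried out in $\L^p\to\L^q$ rather than only in $\L^2$.
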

\begin{theorem*}[Off-diagonal estimates -- smooth domain]
Suppose that  $\Omega$ is a $\Cont^{1+ \kappa}$ domain for some $\kappa > 0$ and that $m=1$.  Then for all $1\le p \le q \le \infty$ one has
\[ \| \ind_F \e^{-t \N} \ind_E f \|_q \lesssim (t \wedge 1)^{\nicefrac{n}{q}-\nicefrac{n}{p}} \left( 1 + \frac{\dist(E,F)}{t} \right)^{-(1 + \nicefrac{n}{p} -\nicefrac{n}{q})} \| \ind_E f \|_p.
\]
\end{theorem*}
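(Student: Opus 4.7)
The strategy is to invoke the pointwise Poisson-type kernel bound for the semigroup $\{\e^{-t\N}\}_{t \ge 0}$ available on $\Cont^{1+\kappa}$ domains in the scalar case with real symmetric H\"older continuous coefficients, as established in \cite{PoissonBounds}. Writing $K_t$ for the integral kernel of $\e^{-t\N}$, this bound takes the form
\[ |K_t(x,y)| \lesssim \frac{t}{(t + |x-y|)^{n+1}} \qquad (x, y \in \Gamma,\ t \in (0,1]). \]
Once this pointwise estimate is in hand, the desired off-diagonal inequality reduces to a purely quantitative interpolation argument, in which the extra decay in $\dist(E,F)$ is extracted by simple polar coordinate computations.

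Set $d = \dist(E, F)$. From the pointwise bound restricted to $\{|x-y| \ge d\}$ one immediately reads off an $\L^1(\Gamma) \to \L^\infty(\Gamma)$ bound for $\ind_F \e^{-t\N}\ind_E$ of order $t/(t+d)^{n+1} = t^{-n}(1+d/t)^{-(n+1)}$. Using polar coordinates on the $n$-dimensional Lipschitz surface $\Gamma$, one also computes
\[ \sup_{x \in \Gamma} \int_{|x-y| \ge d} \frac{t}{(t+|x-y|)^{n+1}} \d y \lesssim \int_d^\infty \frac{t \rho^{n-1}}{(t+\rho)^{n+1}} \d\rho \lesssim \min\{1, t/d\} = (1+d/t)^{-1}. \]
Combined with the symmetry $K_t(x,y) = K_t(y,x)$ (valid since $\N$ is self-adjoint in the real symmetric scalar setting), Schur's test yields $\L^1 \to \L^1$ and $\L^\infty \to \L^\infty$ bounds for $\ind_F \e^{-t\N}\ind_E$ of order $(1+d/t)^{-1}$.

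Two successive applications of Riesz--Thorin interpolation then close the case $t \le 1$. Interpolating the $\L^1 \to \L^\infty$ and $\L^\infty \to \L^\infty$ estimates yields, for every $p \in [1, \infty]$, an $\L^p \to \L^\infty$ bound of order $t^{-n/p}(1+d/t)^{-(1+n/p)}$. A second interpolation, between this bound and the $\L^p \to \L^p$ estimate, produces the $\L^p \to \L^q$ bound of order $t^{n/q - n/p}(1+d/t)^{-(1+n/p-n/q)}$ for every $1 \le p \le q \le \infty$, as the theorem asserts for $t \le 1$. For $t \ge 1$ the prefactor $(t \wedge 1)^{n/q-n/p}$ equals $1$ and, since $d \le \diam\Gamma < \infty$, the decay factor is bounded below by a positive constant; the theorem then reduces to uniform $\L^p \to \L^q$ boundedness of $\e^{-t\N}$ for $t \ge 1$, which follows by writing $\e^{-t\N} = \e^{-(t-1)\N} \e^{-\N}$, applying the bound at $t=1$ just proved, and using $\L^p$-contractivity of the Markovian semigroup from \cite{PoissonBounds}.

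The truly non-routine ingredient is the pointwise Poisson-type kernel bound itself; this is precisely what is not available in the merely Lipschitz or systems setting, and what allows the exponent $-(1+n/p - n/q)$ to replace the weaker $-1$ of the first theorem. That bound depends on both the $\Cont^{1+\kappa}$ regularity of $\Gamma$, which permits flattening the boundary while preserving sufficient coefficient regularity, and on the H\"older continuity of the coefficients, needed to propagate regularity of solutions of the interior Dirichlet problem up to the boundary. Everything downstream of that bound in the argument above is essentially algebraic.
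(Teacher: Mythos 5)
Your proof is correct, but it reaches the conclusion by a genuinely different route from the paper's. Both arguments start from the same $\L^1 \to \L^\infty$ off-diagonal estimate of order $n+1$ read off directly from the Poisson kernel bound (Corollary~\ref{Cor: L1 to Linfty}), and both close with Riesz--Thorin interpolation. The divergence lies in how the order-$1$ on-diagonal estimate is obtained. The paper derives $\L^s$ off-diagonal estimates of order $1$ for $s\in(1,\infty)$ by running the commutator machinery of Theorem~\ref{Thm: ODE from commutator bounds} with $p=q=r=s$, which requires the $\L^s$ commutator estimate from \cite[Thm.~7.3]{PoissonBounds}; this is a substantial ingredient and is what restricts Theorem~\ref{Thm: scalar and smooth domain} to $1<p\le q<\infty$. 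You instead extract $\L^p\to\L^p$ bounds of order $1$ for all $p\in[1,\infty]$ directly from the kernel bound via Schur's test, using only the Ahlfors $n$-regularity of $\Gamma$ (Example~\ref{Ex: Gamma is 1-regular}) to justify the dyadic/polar tail estimate $\sup_{x\in F}\int_{E}|K_t(x,y)|\,d\sigma(y)\lesssim(1+\dist(E,F)/t)^{-1}$ and its twin with $x$ and $y$ exchanged. This is more elementary, dispenses with the commutator estimate entirely, and naturally covers the endpoints $p=1$ and $q=\infty$ which the paper's internal theorem excludes, thereby matching the range stated in the introduction. Two small remarks: your appeal to the symmetry $K_t(x,y)=K_t(y,x)$ is actually superfluous, since the pointwise bound $t/(t+|x-y|)^{n+1}$ is already symmetric and so yields both Schur conditions directly; and the polar coordinate reduction should, strictly speaking, be phrased as a dyadic annulus decomposition using the surface-measure doubling property, but this is exactly what the $n$-regularity provides and the resulting estimate is as you state.
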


\begin{theorem*}[$\L^p$-bounds -- Lipschitz domain]
Suppose that  $n= 1$.  Then $\{ \e^{-t \N} \}_{t\ge0}$ extrapolates to an analytic semigroup on $\L^p$ for all $p \in (1, \infty)$, and the corresponding evolution equation has maximal regularity.
\end{theorem*}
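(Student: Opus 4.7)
The plan is to derive all three conclusions from the off-diagonal estimates of the first theorem together with their complex-time analogues announced in the abstract; the engine is a Blunck--Kunstmann-type extrapolation scheme that converts generalised Gaussian estimates on $\L^2$ into $\L^p\to\L^p$ bounds.

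When $n=1$ the endpoint $s = \nicefrac{2n}{n-1}$ degenerates to $\infty$, so the restrictions $s'\le p$ and $q\le s$ in the first theorem become vacuous, giving
\[ \|\ind_F\e^{-t\N}\ind_E f\|_{q_0} \lesssim (t\wedge 1)^{\nicefrac{n}{q_0}-\nicefrac{n}{p_0}}\Bigl(1+\tfrac{\dist(E,F)}{t}\Bigr)^{-1}\|\ind_E f\|_{p_0} \]
for every pair $1<p_0\le 2\le q_0<\infty$. Estimates of this shape -- $\L^{p_0}\to\L^{q_0}$ smoothing coupled with polynomial off-diagonal decay -- are exactly the generalised Gaussian estimates of Blunck--Kunstmann. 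Feeding them into the corresponding extrapolation theorem yields uniform $\L^p\to\L^p$ boundedness of $\e^{-t\N}$ for every $p\in(p_0,q_0)$; letting $p_0\downarrow 1$ and $q_0\uparrow\infty$ covers the whole interval $(1,\infty)$, and a density argument on $\L^2\cap\L^p$ upgrades this to a strongly continuous semigroup on $\L^p(\Gamma;\C^m)$.

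Re-running the same extrapolation with the complex-time off-diagonal bounds gives uniform $\L^p$-boundedness of $\e^{-z\N}$ on a subsector $\Sigma_\theta$ with $\theta<\pi/2$; this is the sectoriality criterion on $\L^p$, whence the semigroup is bounded analytic. Maximal $\L^q(\L^p)$-regularity then follows from Weis's theorem and the UMD property of $\L^p$: it suffices to upgrade sectoriality to $R$-sectoriality, which can be obtained either by an $R$-version of the Blunck--Kunstmann extrapolation applied to the complex-time bounds, or by extrapolating to $\L^p$ the bounded $\H^\infty$-calculus that $\N$ enjoys on $\L^2$ by self-adjointness (recall that the coefficients are real symmetric).

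The main technical obstacle is that the polynomial decay $(1+\dist(E,F)/t)^{-1}$ is borderline on the one-dimensional boundary: the decay exponent matches the ambient dimension exactly, so a naive annular summation diverges. What rescues the extrapolation is the simultaneous $\L^{p_0}\to\L^{q_0}$ smoothing with $p_0<2<q_0$, and exploiting it is precisely the purpose of the Blunck--Kunstmann framework; the remaining work is to verify the abstract hypotheses in the present geometric setting -- a compact Lipschitz curve equipped with its one-dimensional Hausdorff measure, which is a doubling metric measure space.
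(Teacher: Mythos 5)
Your overall strategy coincides with the paper's: use the $\L^{p_0}\to\L^{q_0}$ off-diagonal estimates (which for $n=1$ hold for all $1<p_0\le 2\le q_0<\infty$, and in complex time) to extrapolate $\L^p$-boundedness and $R$-boundedness of the semigroup, then apply Weis' theorem for maximal regularity. You also correctly isolate the key subtlety: the decay order $\gamma=1$ matches the boundary dimension $n=1$, so a naive Auscher-type criterion $\gamma>n$ just fails, and the simultaneous $\L^{p_0}\to\L^{q_0}$ smoothing with $p_0<2<q_0$ must be brought into play.

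The gap is in the step where you invoke the extrapolation theorem. What is available here is \emph{not} a generalised Gaussian estimate in the sense of Blunck--Kunstmann: those carry exponential off-diagonal decay of the type $\e^{-c\,\dist(E,F)^2/t}$, whereas here one has only the first-order polynomial factor $(1+\dist(E,F)/t)^{-1}$. The Kunstmann--Weis extrapolation in~\cite{KW}, which the paper follows, is likewise stated under exponential decay, as the paper itself points out. So the off-the-shelf theorem you want to cite does not apply, and an adaptation is needed --- this is exactly what Section~\ref{Sec: d=2} of the paper supplies. Working on an $n$-regular metric measure space (Definition~\ref{Def: l-regular}), one proves (Lemma~\ref{Lemma: average of operator family} and Proposition~\ref{Prop: extrapolation abstract}) that $\L^s\to\L^q$ off-diagonal estimates of order $\gamma>\nicefrac{n}{s}$ with $1\le s<2<q$ imply square function estimates on $\L^p$ for $p\in(s,q)$: one dominates $\Avg{q}{|z|}(T(z)f)$ pointwise by $\MO_s f$, the annular sum $\sum_j 2^{-j(\gamma-\nicefrac{n}{s})}$ converging precisely because taking $s>1$ relaxes the classical threshold $\gamma>n$ to $\gamma>\nicefrac{n}{s}$, and then applies vector-valued Fefferman--Stein. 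For $n=1$, choosing $1<s<\min(p,2)$ gives $\nicefrac{n}{s}<1=\gamma$. This adapted extrapolation is the genuinely new ingredient of the proof; you recognised the obstacle but resolved it by pointing to a theorem that does not cover it.
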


We should mention that for an elliptic operator $\cL$ with complex coefficients on a domain $\Omega$, the classical way to extend the semigroup to $\L^p(\Omega)$ for appropriate  $p < 2$ is to use off-diagonal bounds. In that case, these bounds have exponential decay of the form
\begin{align}\label{velo}
 \| \ind_F \e^{-t \cL} \ind_E f \|_2 \lesssim (t \wedge 1)^{\nicefrac{(n+1)}{4}-\nicefrac{(n+1)}{2p}}
\e^{-c \frac{\dist(E,F)^2}{t}}
\| \ind_E f \|_p.
\end{align}

 The idea to prove such bounds is to consider a  perturbed semigroup of the form $\e^{-\lambda \varphi} \e^{-t\cL} \e^{\lambda \varphi}$ for a scalar $\lambda$ and a   Lipschitz function $\varphi$ on $\Omega$. The fact that the domain of the sesquilinear form of $\cL$ \footnote{i.e., $\W^{1,2}$ if $\cL$ is subject to Neumann boundary conditions and $\W_0^{1,2}$ for the Dirichlet ones.} is stable under multiplication by $e^{\lambda \varphi}$ allows us to  define again the perturbed operator using a form.
 One proves $\L^2 \to \L^q$ estimates for this perturbed semigroup and then optimizes over $\lambda$ and $\varphi$. This uses Sobolev embeddings together with the fact that the perturbation only induces terms of lower order.
 The latter argument does not work for the Dirichlet--to--Neumann operator since it is not a differential operator. By the same reason,  one cannot expect a  decay of any order.

  Instead, we rely on $\L^2$  commutator estimates for $\N$ with a given Lipschitz function $g$ on $\Gamma$. For instance, such estimates  were proved by Shen \cite{Shen} for bounded Lipschitz domains and by Hofmann and Zhang \cite{Hofmann} for the half-space.
  This way, we can still derive $\L^p \to \L^q$ off-diagonal estimates in the Lipschitz setting. We present these arguments in Section~\ref{Sec: Lipschitz ODE}.
The improvement to $\L^p \to \L^q$  off-diagonal estimates of optimal order when $\Omega$ is $\Cont^{1+ \kappa}$ and $m=1$ is due to the fact that in this case the heat kernel of $\N$ satisfies a Poisson bound, and that the commutator fulfills $\L^p$-estimates, according to  \cite{PoissonBounds}.
Details will be presented in Section~\ref{Sec: scalar and smooth}.
It is a known fact (see for example Auscher~\cite[Lem.~3.3]{Memoirs}) that if one has an off-diagonal estimate with decay $\gamma$ in the sense that
\[ \| \ind_F \e^{-t \N} \ind_E f \|_q \lesssim (t \wedge 1)^{\nicefrac{n}{q}-\nicefrac{n}{p}} \left( 1 + \frac{\dist(E,F)}{t} \right)^{-\gamma} \| \ind_E f \|_p
\]
with $\gamma > n$, then the semigroup extrapolates to  $\L^p$. In our situation, we do not have sufficient decay to employ this result, but in the abstract setting of a metric measure space of \enquote{dimension $n$} we manage to prove that if  $\{T(z)\}_{z \in U}$ is a family of operators satisfying $\L^s \to \L^q$ off-diagonal bounds of order $\gamma > n/s$, then $\{T(z)\}_{z \in U}$ extrapolates to a bounded family on $\L^s$. Our condition is less restrictive if $s>1$. In addition, this family is $R$-bounded on $\L^s$. The details are presented in Section \ref{Sec: d=2}. This allows us to conclude the extrapolation of $\e^{-z \N}$ to $\L^s$ (for complex $z$ in a certain sector) when $n=1$, so that we obtain the maximal regularity property.

We conclude this introduction with the following list of open problems. We hope that the ideas and techniques used in this paper can be extended to eventually solve some of them.

\begin{problem}
	Suppose that $\Omega$ is merely Lipschitz and consider either the scalar or system case with real symmetric and H\"older continuous coefficients. Do we have $\L^2 \to \L^q$ off-diagonal bounds of order $1 + n(\nicefrac{1}{2} -\nicefrac{1}{q})$, at least for $q \le \frac{2n}{n-1}$? As we have mentioned above, this is the case if $\Omega$ is $\Cont^{1+ \kappa}$, $m=1$ and the coefficients are real and H\"older continuous. In our proof for off-diagonal bounds of order $1$,  we use $\L^2$-estimates for the commutator $[\N, g]$ for smooth functions $g$. An idea to reach an order $ \gamma > 1$ is to use some multi-commutator estimates.
\end{problem}

\begin{problem}
	Is it possible to remove the H\"older regularity assumption on the coefficients? Is it possible to prove some off-diagonal bounds when the coefficients are  complex?
\end{problem}

\begin{problem}
	Suppose $n \ge 2$, $\Omega$ is Lipschitz  and  the coefficients  are real symmetric and H\"older continuous. Does the semigroup $\{ \e^{-t \N} \}_{t\ge0}$ extrapolates  to $\L^p$ for all $p \in (1, \infty)$? The same question arises for $p = 1$, as well as on the space $\Cont(\Gamma)$. For these two cases, the answer is yes in the scalar case when $\Omega$ is $\Cont^{1+ \kappa}$, see \cite{ComplexPoisson}.
\end{problem}

\subsection*{Notation}

For  $p \in [1, \infty]$, we define the numbers $p^*$ and $p_*$ by the equations $\nicefrac{1}{p^*} = \nicefrac{1}{p}-\nicefrac{1}{n}$ and $\nicefrac{1}{p_*} = \nicefrac{1}{p}+\nicefrac{1}{n}$, respectively.
If $1\leq p < n$, then $p^*$ and $p_*$ are the usual  \emph{upper} and \emph{lower Sobolev conjugate exponents} relative to the boundary dimension. We use these numbers for algebraic reasons, independent from their rôle in Sobolev embedding results. If $p \ge n$, then
$p^{*} \le 0$.  Given numbers $1\leq p \leq q \leq \infty$, and $\theta \in [0,1]$, the number $\ip{p}{q}{\theta} \in [p,q]$ is fixed by the identity $\nicefrac{1}{\ip{p}{q}{\theta}} = \nicefrac{(1-\theta)}{p} + \nicefrac{\theta}{q}$. Write $(\cdot \SP \cdot)_2$  and $\|\cdot\|_2$ for the inner product and norm of $\L^2$, respectively. It will be clear from the context if the $\L^2$ space on $\Omega$ or on $\Gamma$ is meant. The same is true for the norm $\|\cdot\|_p$ of the respective $\L^p$ spaces, $p\in [1,\infty]$. Given a (bounded) function $g$, we also use the symbol $g$ to denote its associated multiplication operator. For two vectors $\xi$ and $\eta$ in $\C^\ell$ put $\xi \cdot \eta \coloneqq \sum_i \xi_i \eta_i$. For subsets $E$ and $F$ of a given metric space,  $\dist(E,F)$ denotes the distance between $E$ and $F$, and $\diam(E)$ denotes the diameter of $E$.

\section{Preliminaries}
\label{Sec: Preliminaries}

\subsection{Elliptic systems and harmonic functions}
\label{Subsec: Elliptic systems}

Let $\Omega \subseteq \R^{n+1}$, $n\geq 1$, be a Lipschitz domain. We consider an \emph{elliptic system} $\cL$ of \emph{size} $m\geq 1$ with bounded leading coefficients $A^{\alpha\beta} \colon \Omega \to \C^{(n+1)\times (n+1)}$ for $\alpha,\beta=1,\dots,m$. The case $m=1$ means that $\cL$ is associated with an \emph{elliptic equation}. We will omit the size of the system in the notation for Sobolev space, that is to say, we mean $\W^{1,2}(\Omega; \C^m)$ when we write $\W^{1,2}(\Omega)$, for instance. We impose ellipticity in the following sense. There exists $\lambda>0$ such that
\begin{align}
\label{Eq: ellipticity}
	\Re \sum_{\alpha,\beta=1}^m  A^{\alpha\beta}(x) \xi^\alpha \cdot \overline{\xi^\beta} \geq \lambda |\xi|^2 \qquad (x \in \Omega, (\xi^\alpha)_{\alpha=1}^m = \xi \in \C^{d m}).
\end{align}
To give a precise meaning to the system $\cL$, consider the bounded sesquilinear form $\fra \colon \W^{1,2}(\Omega) \times \W^{1,2}(\Omega) \to \C$ given by
\begin{align}
	\fra(u,v) = \sum_{\alpha,\beta=1}^m \sum_{i,j=1}^{n+1} \int_\Omega A^{\alpha\beta}_{ij} \partial_j u^\beta \cdot \overline{\partial_i v^\alpha} \d x \qquad (u,v\in \W^{1,2}(\Omega)).
\end{align}
Then $\cL \colon \W^{1,2}(\Omega) \to (\W^{1,2}(\Omega))^*$ is the bounded operator determined by $\langle \cL u, v \rangle = \fra(u,v)$.
Given a system $\cL$, we also consider the system $\cL_0$ subject to Dirichlet boundary conditions, obtained from $\cL$ by systematically replacing $\W^{1,2}(\Omega)$ with $\W^{1,2}_0(\Omega)$. The latter space is defined as the closure of $\Cont_0^\infty(\Omega)$ in $\W^{1,2}(\Omega)$.

\begin{definition}
\label{Def: harmonic}
A function $u\in \W^{1,2}(\Omega)$ is called \emph{$\cL$-harmonic} if $\fra(u,\varphi) = 0$ for all $\varphi\in \Cont_0^\infty(\Omega)$. Write $\Harm_\cL(\Omega)$ for the subspace of all such functions. If the system $\cL$ is clear from the context, simply say that $u$ is \emph{harmonic}, and write $\Harm(\Omega)$ for the subspace of harmonic functions.
\end{definition}

Recall the following lemma from~\cite{Arendt-Mazzeo} and~\cite{PoissonBounds} in the real and scalar case. We reproduce the proof for the reader's convenience.

\begin{lemma}
\label{Lem: W12 decomposition}
	The subspace $\Harm(\Omega)$ of $\W^{1,2}(\Omega)$ is closed, and $\W^{1,2}(\Omega)$ decomposes into the direct topological sum $\W^{1,2}(\Omega) = \Harm(\Omega) \oplus \W^{1,2}_0(\Omega)$.
\end{lemma}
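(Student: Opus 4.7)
The plan is as follows. For closedness of $\Harm(\Omega)$, observe that for each fixed $\varphi \in \Cont_0^\infty(\Omega)$ the map $u \mapsto \fra(u,\varphi)$ is a bounded (anti-)linear functional on $\W^{1,2}(\Omega)$, since $\fra$ is bounded. Hence $\Harm(\Omega) = \bigcap_{\varphi \in \Cont_0^\infty(\Omega)} \ker \fra(\,\cdot\,,\varphi)$ is an intersection of closed subspaces and thus closed.

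For the decomposition, I would construct the summand in $\W^{1,2}_0(\Omega)$ by solving an inhomogeneous Dirichlet problem via Lax--Milgram. Concretely, given $u \in \W^{1,2}(\Omega)$, the map $\varphi \mapsto \overline{\fra(u,\varphi)}$ is a bounded antilinear functional on $\W^{1,2}_0(\Omega)$. The key point is that although $\fra$ is only elliptic in the gradient seminorm via~\eqref{Eq: ellipticity}, the Poincar\'e inequality on the bounded domain $\Omega$ renders the restriction of $\fra$ to $\W^{1,2}_0(\Omega) \times \W^{1,2}_0(\Omega)$ coercive:
\begin{align}
\Re \fra(\varphi,\varphi) \geq \lambda \|\nabla \varphi\|_2^2 \gtrsim \|\varphi\|_{\W^{1,2}}^2 \qquad (\varphi \in \W^{1,2}_0(\Omega)).
\end{align}
Therefore Lax--Milgram yields a unique $v \in \W^{1,2}_0(\Omega)$ such that $\fra(v,\varphi) = \fra(u,\varphi)$ for every $\varphi \in \W^{1,2}_0(\Omega)$, with the bound $\|v\|_{\W^{1,2}} \lesssim \|u\|_{\W^{1,2}}$. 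Setting $h \coloneqq u - v$, we have $\fra(h,\varphi) = 0$ for all $\varphi \in \W^{1,2}_0(\Omega)$ and a fortiori for all $\varphi \in \Cont_0^\infty(\Omega)$, so $h \in \Harm(\Omega)$. This gives the decomposition $u = h + v$.

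To see that this decomposition is unique (so that the sum is direct), suppose $w \in \Harm(\Omega) \cap \W^{1,2}_0(\Omega)$. Since $\Cont_0^\infty(\Omega)$ is dense in $\W^{1,2}_0(\Omega)$ and $\fra$ is bounded, $\fra(w,\varphi) = 0$ extends from test functions to all $\varphi \in \W^{1,2}_0(\Omega)$. Testing with $\varphi = w$ and taking real parts, coercivity forces $\nabla w = 0$; as $w \in \W^{1,2}_0(\Omega)$ on a bounded domain, this implies $w=0$. Finally, the estimate $\|v\|_{\W^{1,2}} \lesssim \|u\|_{\W^{1,2}}$ obtained from Lax--Milgram shows that the projection onto $\W^{1,2}_0(\Omega)$ is continuous, which combined with closedness of $\Harm(\Omega)$ makes the sum topological.

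I do not expect a substantial obstacle here: the only subtlety is verifying coercivity on $\W^{1,2}_0(\Omega)$, which reduces to the Poincar\'e inequality available on bounded domains, and checking that constants lying in $\W^{1,2}_0(\Omega)$ vanish, both of which are standard.
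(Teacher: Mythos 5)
Your proposal is correct and follows essentially the same route as the paper: closedness via boundedness of $\fra$, existence of the decomposition via Lax--Milgram on $\W^{1,2}_0(\Omega)$ (coercivity coming from the Poincar\'e inequality), uniqueness by testing with the element itself, and topologicality from the Lax--Milgram operator norm bound. Your phrasing of closedness as an intersection of kernels of bounded functionals is a minor cosmetic variant of the paper's sequential argument, but the substance is identical.
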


\begin{proof}
	To begin with, we show that $\Harm(\Omega)$ is a closed subspace. To this end, let $u_n \in \Harm(\Omega)$ such that $u_n \to u$ in $\W^{1,2}(\Omega)$. Since the form $\fra$ is bounded, we deduce for $\varphi \in \Cont_0^\infty(\Omega)$ that $$\fra(u, \varphi) = \lim_n \fra(u_n, \varphi) = 0,$$ therefore $u\in \Harm(\Omega)$.

	Next, fix $u\in \W^{1,2}(\Omega)$. We seek a decomposition $u=(u-u_0)+u_0$, where $u_0$ is some suitable solution of the system $\cL_0$. To this end, define the auxiliary antilinear functional
	\begin{align}
		F \colon \W^{1,2}_0(\Omega) \ni v \mapsto \fra(u,v) \in \C.
	\end{align}
	Owing to the Poincar\'{e} inequality, the Lax--Milgram lemma provides a unique solution $u_0\in \W^{1,2}_0(\Omega)$ of the equation $\cL_0 u_0 = F$. For $\varphi\in \Cont_0^\infty(\Omega)$, we readily check $\fra(u-u_0,\varphi) = \fra(u,\varphi) - F(\varphi) = 0$ by definition of $F$. Hence, $u-u_0 \in \Harm(\Omega)$ as desired.

	To show that the decomposition is direct, let $u\in \Harm(\Omega) \cap \W^{1,2}_0(\Omega)$. Since $u$ is harmonic and $\fra$ is continuous, we find that $\fra(u,v) = 0$ for $v\in \W^{1,2}_0(\Omega)$. Specializing to $v=u$, we deduce $\fra(u,u)=0$, which yields $u=0$ by ellipticity of $\fra$ and the Poincar\'e inequality. Finally, $\|F \| \leq \| u \|_{\W^{1,2}(\Omega)}$ by boundedness of $\fra$, so using that $\cL_0$ is an isomorphism we find $$\| u_0 \|_{\W^{1,2}(\Omega)} \lesssim \| F \| \lesssim  \| u \|_{\W^{1,2}(\Omega)},$$ hence the decomposition is topological.
\end{proof}

\begin{proposition}
\label{Prop: trace operator}
	The trace operator $\tr\colon \W^{1,2}(\Omega) \to \H^{\nicefrac{1}{2}}(\Gamma)$ is bounded and onto. Restricted to $\Harm(\Omega)$, the trace operator is moreover one-to-one.
\end{proposition}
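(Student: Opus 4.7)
The plan is to invoke classical trace theory for Lipschitz domains for boundedness and surjectivity, and then to reduce injectivity on $\Harm(\Omega)$ to Lemma~\ref{Lem: W12 decomposition} via the characterization of $\W^{1,2}_0(\Omega)$ as the kernel of the trace.

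First, for boundedness of $\tr$: localize via a finite partition of unity subordinate to a covering of $\Gamma$ by bi-Lipschitz charts that straighten the boundary, and use the change of variables to reduce to the analogous statement on the upper half-space $\R^{n+1}_+$. There, the boundedness of $\tr \colon \W^{1,2}(\R^{n+1}_+) \to \H^{\nicefrac{1}{2}}(\R^n)$ is standard (Gagliardo's trace theorem). For surjectivity, one constructs a bounded right inverse $\Ext \colon \H^{\nicefrac{1}{2}}(\Gamma) \to \W^{1,2}(\Omega)$ by the same localization procedure, using that the half-space trace admits a bounded right inverse (e.g., the harmonic extension or a suitable Poisson-type formula). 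The existence of such a bounded right inverse immediately yields that $\tr$ is onto.

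For injectivity on $\Harm(\Omega)$: let $u \in \Harm(\Omega)$ with $\tr u = 0$. The classical kernel characterization on Lipschitz domains gives $\W^{1,2}_0(\Omega) = \ker(\tr)$, so $u \in \W^{1,2}_0(\Omega)$. But Lemma~\ref{Lem: W12 decomposition} ensures that the sum $\Harm(\Omega) \oplus \W^{1,2}_0(\Omega)$ is direct, hence $u \in \Harm(\Omega) \cap \W^{1,2}_0(\Omega) = \{0\}$, and $u = 0$.

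None of the steps is genuinely delicate: each piece is a direct application of known results for Lipschitz domains, combined with the decomposition proved in Lemma~\ref{Lem: W12 decomposition}. The only minor point worth care is to cite (rather than re-derive) the fact that $\ker(\tr) = \W^{1,2}_0(\Omega)$ on bounded Lipschitz domains, since this is what allows the injectivity statement to follow from the preceding lemma without repeating the ellipticity and Poincar\'e argument used there.
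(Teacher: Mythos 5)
Your proof is correct and follows essentially the same route as the paper: boundedness and surjectivity are classical facts about the trace on Lipschitz domains (the paper simply cites~\cite[Thm.~3.37]{McLean} rather than re-deriving them via localization), and the injectivity on $\Harm(\Omega)$ is proved exactly as in the paper, by combining the kernel characterization $\ker(\tr)=\W^{1,2}_0(\Omega)$ (the paper cites~\cite[Thm.~4.10]{Necas}) with the directness of the decomposition from Lemma~\ref{Lem: W12 decomposition}.
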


\begin{proof}
	Boundedness of the trace operator is shown in~\cite[Thm.~3.37]{McLean}.

	Now let $u\in \Harm(\Omega)$ with $\tr(u)=0$. The null space of $\tr$ coincides with $\W^{1,2}_0(\Omega)$ by~\cite[Thm.~4.10]{Necas}. Hence, $u\in \Harm(\Omega) \cap \W^{1,2}_0(\Omega)$, thus $u=0$ by Lemma~\ref{Lem: W12 decomposition}, which shows injectivity.
\end{proof}

\begin{definition}
\label{Def: harmonic extension}
	Call the space $\H^{\nicefrac{1}{2}}(\Gamma)$ the \emph{trace space} of $\W^{1,2}(\Omega)$. Given some function $f\in \H^{\nicefrac{1}{2}}(\Gamma)$, write $\Ext(f)$ for the unique function $u\in \Harm(\Omega)$ with $\tr(u) = f$. The function $\Ext(f)$ is called the \emph{harmonic lifting} of $f$.
\end{definition}

\subsection{Dirichlet--to--Neumann operators}
\label{Subsec: Dir to Neu}

We are going to properly define the Dirichlet--to--Neumann operator associated with $\cL$. For this, we will need the following coercivity result.

\begin{lemma}
\label{Lem: ellipticity with trace}
	There exist $\mu > 0$ and $\omega > 0$ such that one has the estimate
	\begin{align}
	\label{Eq: ellipticity inequality}
		\Re \fra(u,u) + \omega \|\tr u\|_{\L^2(\Gamma)}^2 \geq \mu \|u\|_{\W^{1,2}(\Omega)}^2 \qquad (u\in \Harm(\Omega)).
	\end{align}
\end{lemma}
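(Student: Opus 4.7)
The plan is to combine a G{\aa}rding-type inequality coming from ellipticity with a Poincar{\'e}--trace inequality on $\W^{1,2}(\Omega)$, thereby reducing matters to a compactness/contradiction argument. Applying the pointwise ellipticity assumption~\eqref{Eq: ellipticity} with $\xi = (\nabla u^\alpha)_{\alpha=1}^m$ yields
\[
\Re \fra(u,u) \geq \lambda \|\nabla u\|_2^2 \qquad (u \in \W^{1,2}(\Omega)).
\]
Since $\|u\|_{\W^{1,2}(\Omega)}^2 = \|u\|_2^2 + \|\nabla u\|_2^2$, it suffices to exhibit some constant $C>0$ such that
\[
\|u\|_2^2 \leq C\bigl(\|\nabla u\|_2^2 + \|\tr u\|_2^2\bigr) \qquad (u \in \W^{1,2}(\Omega)),
\]
and then choose, say, $\omega \coloneqq C^{-1}$ and $\mu$ small depending on $\lambda$ and $C$. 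Note that no harmonicity will be used in this reduction, so the estimate in fact extends to all of $\W^{1,2}(\Omega)$.

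To establish the Poincar{\'e}--trace inequality, I would argue by contradiction. Supposing the inequality fails, there is a sequence $(u_k) \subseteq \W^{1,2}(\Omega)$ with $\|u_k\|_2 = 1$ but $\|\nabla u_k\|_2^2 + \|\tr u_k\|_2^2 \to 0$. In particular $(u_k)$ is bounded in $\W^{1,2}(\Omega)$, and since $\Omega$ is a bounded Lipschitz domain, the Rellich--Kondrachov embedding $\W^{1,2}(\Omega) \hookrightarrow \L^2(\Omega)$ is compact. Passing to a subsequence, $u_k \to u$ in $\L^2(\Omega)$ with $\|u\|_2 = 1$. Combined with $\nabla u_k \to 0$ in $\L^2(\Omega)$, this gives $u \in \W^{1,2}(\Omega)$, $\nabla u = 0$, and $u_k \to u$ in $\W^{1,2}(\Omega)$. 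Continuity of the trace operator (Proposition~\ref{Prop: trace operator}) together with $\|\tr u_k\|_2 \to 0$ forces $\tr u = 0$, hence $u \in \W_0^{1,2}(\Omega)$ by~\cite[Thm.~4.10]{Necas}. The Poincar{\'e} inequality on $\W_0^{1,2}(\Omega)$ applied to $u$ with $\nabla u = 0$ then gives $u = 0$, contradicting $\|u\|_2 = 1$.

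Combining both estimates,
\[
\Re \fra(u,u) + \omega \|\tr u\|_2^2 \geq \lambda \|\nabla u\|_2^2 + \omega \|\tr u\|_2^2 \geq \tfrac{1}{C} \min(C\lambda, 1)\,\|u\|_{\W^{1,2}(\Omega)}^2,
\]
which yields the claim with $\mu \coloneqq \min(\lambda, C^{-1})$. The main subtlety is merely to verify that the trace passes to the limit along the Rellich subsequence, which is free of charge thanks to Proposition~\ref{Prop: trace operator}; no genuine obstacle is expected.
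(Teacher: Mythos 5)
Your approach is correct in substance but takes a genuinely different route from the paper. The paper cites a result of Arendt--Mazzeo which, for every $\eps>0$, produces $\omega>0$ with
\[
\omega\|\tr u\|_{\L^2(\Gamma)}^2 + \eps\|u\|_{\W^{1,2}(\Omega)}^2 \geq \|u\|_{\L^2(\Omega)}^2 \qquad (u \in \Harm(\Omega)),
\]
and this relies on the fact that $\tr$ is \emph{injective} on $\Harm(\Omega)$ together with compactness of $\W^{1,2}(\Omega) \hookrightarrow \L^2(\Omega)$; one then absorbs the $\eps$-term. You instead prove a direct Poincar\'e--trace inequality $\|u\|_2^2 \leq C(\|\nabla u\|_2^2 + \|\tr u\|_2^2)$ on all of $\W^{1,2}(\Omega)$ by the standard Rellich/contradiction argument, observing that the only $u$ with $\nabla u = 0$ and $\tr u = 0$ on a connected Lipschitz domain is $u=0$. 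This is more elementary, self-contained, and yields the slightly stronger statement that the lemma's inequality actually holds for all $u \in \W^{1,2}(\Omega)$, not just harmonic ones (the paper does not need or assert this extra generality, but it comes for free in your argument). Both routes ultimately rest on the Kondrashov compactness, so the hard analytic input is the same.

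One small quibble: your final constant $\mu = \min(\lambda, C^{-1})$ with $\omega = C^{-1}$ does not quite check out. From the Poincar\'e--trace inequality one gets $\|u\|_{\W^{1,2}}^2 \leq (C+1)\|\nabla u\|_2^2 + C\|\tr u\|_2^2$, so to bound $\lambda\|\nabla u\|_2^2 + \omega\|\tr u\|_2^2 \geq \mu\|u\|_{\W^{1,2}}^2$ one needs $\mu \leq \lambda/(C+1)$ and $\omega \geq \mu C$; your choice fails the second constraint if, say, $\lambda$ is large and $C>1$. Taking $\mu = \lambda/(C+1)$ and $\omega = \lambda C/(C+1)$ repairs this. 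This is a bookkeeping slip, not a gap in the argument.
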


\begin{proof}
	Observe that the embedding $\W^{1,2}(\Omega) \subseteq \L^2(\Omega)$ is compact by the Kondrashov embedding theorem~\cite[Cor.~6.1 \& Cor.~6.2]{Necas}, and that the trace operator $\tr: \Harm(\Omega) \to \L^2(\Gamma)$ is bounded and injective according to Proposition~\ref{Prop: trace operator}. Hence, according to~\cite{Arendt-Mazzeo}, for every $\eps>0$ there exists $\omega>0$ such that
	\begin{align}
	\label{Eq: Arendt-Mazzeo}
		\omega \| \tr u \|_{\L^2(\Gamma)}^2 + \eps \|u\|_{\W^{1,2}(\Omega)}^2 \geq \|u\|_{\L^2(\Omega)}^2 \qquad (u\in \Harm(\Omega)).
	\end{align}
	Moreover, since $\cL$ is elliptic, there is some $c>0$ such that
	\begin{align}
	\label{Eq: garding}
		\Re \fra(u,u) \geq c \|\nabla u \|_{\L^2(\Omega)}^2 \qquad (u\in \W^{1,2}(\Omega)).
	\end{align}
	Adding~\eqref{Eq: Arendt-Mazzeo} and~\eqref{Eq: garding}, we find
	\begin{align}
		\Re \fra(u,u) + \omega \| \tr u \|_{\L^2(\Gamma)}^2 + \eps \|u\|_{\W^{1,2}(\Omega)}^2 \geq \min(1,c) \| u \|_{\W^{1,2}(\Omega)}^2
	\end{align}
	for all $u\in \Harm(\Omega)$. Finally, choose $2\eps \leq \min(c,1)$ and absorb $\eps \|u\|_{\W^{1,2}(\Omega)}^2$ into the right-hand side to conclude.
\end{proof}

We have seen so far that $\tr \colon \Harm(\Omega) \to \L^2(\Gamma)$ is bounded and has dense range, and that the form $\fra \colon \Harm(\Omega) \times \Harm(\Omega)$ is $j$-elliptic according to Lemma~\ref{Lem: ellipticity with trace}, where $j=\tr$. For further information on the concept of $j$-ellipticity and its applicability to Dirichlet--to--Neumann operators, the reader can consult~\cite[Sec.~2]{tE-O14} and~\cite{Arendt-terElst}. This allows to define the Dirichlet--to--Neumann operator associated with $\cL$ as the operator in $\L^2(\Gamma)$ associated with the $\tr$-elliptic form $\fra$. Alternatively, one defines the Dirichlet--to--Neumann operator as the operator associated with the sesquilinear form
$$\frb(f,g) = \fra(E(f), E(g)) \qquad (f, g \in \H^{\nicefrac{1}{2}}(\Gamma)).$$
We refer the reader to \cite{tE-O14} or \cite{PoissonBounds} for more details.

\begin{definition}
\label{Def: Dir to Neu}
	The \emph{Dirichlet--to--Neumann operator} $\N$ associated with $\cL$ is defined as follows: For $f \in \H^{\nicefrac{1}{2}}(\Gamma)$ and $g \in \L^{2}(\Gamma)$,
	\begin{align}
	\left[ f \in D(\N)\  \&\   \N f = g  \right] \Longleftrightarrow \fra(E(f), v) = (g \SP \tr(v))_{\L^2(\Gamma)} \text{ for all } v\in \Harm(\Omega).
	\end{align}
\end{definition}
Note that by boundedness of $\fra$  and \eqref{Eq: ellipticity inequality},
\begin{align}
| \Im ( \N f \SP f)_{\L^2(\Gamma)} | &= | \Im \fra(E(f), E(f)) |\\
&\lesssim  \| E(f) \|_{\W^{1,2}(\Omega)}^2\\
& \lesssim  \Re ( ( \N f \SP f)_{\L^2(\Gamma)} + \omega \| f\|_{\L^2(\Gamma)}^2),
\end{align}
which means that the operator $\Ns = \N + \omega$ is sectorial in the sense that its numerical range is contained in a  closed sector of  $\C^+$ with some angle $\theta \in [0, \nicefrac{\pi}{2})$. Therefore,  $\Ns$ is $m$-$\theta$-sectorial in $\L^2(\Gamma)$, and in particular
$-\Ns$ generates an analytic semigroup of contractions
$\{ \e^{-z\Ns} \}_{z\in \Sec_{\nicefrac{\pi}{2}-\theta}}$ on $\L^2(\Gamma)$.  This semigroup is of course also strongly continuous at zero.

\subsection{Off-diagonal estimates}
\label{Off-diagonal estimates}

We introduce the central notion of this article.
\begin{definition}[Off-diagonal estimates]
\label{Def: ODE}
	Let $X$ be a metric measure space, $n$ a positive number, $U\subseteq \C \setminus \{ 0 \}$, $\{ T(z) \}_{z\in U}$ be a family of operators on $\L^2(X)$, and let $1\leq p \leq q \leq \infty$. Say that $\{ T(z) \}_{z\in U}$ satisfies $\L^p \to \L^q$ \emph{off-diagonal estimates} of \emph{order} $\gamma \geq 0$, if for all $z\in U$, $f \in \L^p(X) \cap \L^2(X)$, and measurable sets $E,F \subseteq X$ one has the estimate
	\begin{align}
	\label{Eq: ODE}
		\| \ind_F T(z) \ind_E f \|_q \lesssim (|z| \wedge \diam(X))^{\nicefrac{n}{q}-\nicefrac{n}{p}} \left( 1 + \frac{\dist(E,F)}{|z|} \right)^{-\gamma} \| \ind_E f \|_p.
	\end{align}
	In the case $\gamma=0$ we simply speak about $\L^p \to \L^q$ \emph{boundedness}.
	Finally, if $p=q$, our nomenclature reduces to $\L^p$ \emph{off-diagonal estimates} (of \emph{order} $\gamma$) and $\L^p$-boundedness.
\end{definition}

\begin{remark}
\label{Rem: ODE} We make the following two observations.

\begin{enumerate}
\item If $\diam(X)$ is infinite, then the prefactor in~\eqref{Eq: ODE} reduces to $|z|^{\nicefrac{n}{q}-\nicefrac{n}{p}}$. Otherwise, when $\diam(X)$ is finite, we could have written $(|z| \wedge 1)^{\nicefrac{n}{q}-\nicefrac{n}{p}}$ instead, but our version is better suited to derive estimates that are independent of $\diam(X)$ later on.
\item Typically,  the number  $n$ is  the dimension of $X$ and the above estimate is adapted  to operators of order $1$, compare with \eqref{velo}.
\end{enumerate}
\end{remark}

\begin{proposition}
\label{Prop: hypercontractive}
	Define $s$ through the relation $\nicefrac{1}{s} = \nicefrac{1}{2}-\nicefrac{1}{2n}$.
	Let $q\in (2,\infty)$ with $q\leq s$, and let $\varphi \in [0,\nicefrac{\pi}{2}-\theta)$. Then,
	 $\{ \e^{-t \N} \}_{t >0}$ and  $\{ \e^{-z \Ns} \}_{z\in \Sec_\varphi}$ are  $\L^2\to \L^q$ bounded.
\end{proposition}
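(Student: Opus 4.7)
The plan is to combine the Sobolev trace embedding on the Lipschitz domain $\Omega$ with the form ellipticity of Lemma~\ref{Lem: ellipticity with trace} and the standard analytic semigroup bounds for the $m$-sectorial operator $\Ns$ in $\L^2(\Gamma)$.

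First, for $f\in\L^2(\Gamma)$ and $z\in\Sec_\varphi$, analyticity of the semigroup places $u\coloneqq\e^{-z\Ns}f\in D(\Ns)=D(\N)$. Its harmonic lifting $\Ext(u)\in\Harm(\Omega)$ satisfies $\tr\Ext(u)=u$, and the Sobolev trace embedding $\W^{1,2}(\Omega)\hookrightarrow\L^s(\Gamma)$, $s=\nicefrac{2n}{n-1}$, yields $\|u\|_{\L^s(\Gamma)}\lesssim\|\Ext(u)\|_{\W^{1,2}(\Omega)}$.

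Next I would invoke Lemma~\ref{Lem: ellipticity with trace} together with the defining identity $\fra(\Ext(u),\Ext(u))=(\N u\SP u)_{\L^2(\Gamma)}$ coming from Definition~\ref{Def: Dir to Neu} and the relation $\Ns=\N+\omega$ to deduce
\[
\|\Ext(u)\|_{\W^{1,2}(\Omega)}^2 \lesssim \Re\fra(\Ext(u),\Ext(u))+\omega\|u\|_{\L^2(\Gamma)}^2 = \Re(\Ns u\SP u)_{\L^2(\Gamma)}.
\]
Cauchy--Schwarz combined with the analytic semigroup bounds $\|\e^{-z\Ns}\|_{\L^2\to\L^2}\le 1$ and $\|\Ns\,\e^{-z\Ns}\|_{\L^2\to\L^2}\lesssim |z|^{-1}$, valid on any closed subsector of the sector of analyticity by $m$-$\theta$-sectoriality of $\Ns$, then gives
\[
\|\e^{-z\Ns} f\|_{\L^s(\Gamma)}\lesssim |z|^{-\nicefrac{1}{2}}\|f\|_{\L^2(\Gamma)}.
\]
H\"older interpolation with $\L^2$-contractivity now delivers $\|\e^{-z\Ns} f\|_{\L^q(\Gamma)}\lesssim|z|^{\nicefrac{n}{q}-\nicefrac{n}{2}}\|f\|_{\L^2(\Gamma)}$ for every $q\in[2,s]$.

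To match the precise prefactor $(|z|\wedge\diam\Gamma)^{\nicefrac{n}{q}-\nicefrac{n}{2}}$ of Definition~\ref{Def: ODE}, I would handle $|z|>\diam\Gamma$ by writing $\e^{-z\Ns}=\e^{-w\Ns}\,\e^{-(z-w)\Ns}$ with $w=\diam(\Gamma)\,z/|z|\in\overline{\Sec_\varphi}$, applying the bound just proved to the first factor and $\L^2$-contractivity to the second. For the real-time semigroup $\{\e^{-t\N}\}_{t>0}$, the ellipticity estimate~\eqref{Eq: garding} shows that $\N$ itself is accretive on $\L^2(\Gamma)$, so $\e^{-t\N}$ is an $\L^2$-contraction; the identity $\e^{-t\N}=\e^{\omega t}\e^{-t\Ns}$ transfers the bound for $t\le 1$ with the exponential absorbed into constants, while for $t\ge\nicefrac{1}{2}$ the decomposition $\e^{-t\N}=\e^{-\nicefrac{1}{2}\N}\,\e^{-(t-\nicefrac{1}{2})\N}$ combined with the small-time bound at $t=\nicefrac{1}{2}$ gives a uniform bound.

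The key step is the combination of the Sobolev trace embedding with the analytic semigroup bound in $\L^2$ to extract quantitative boundary $\L^s$-smoothing from the $m$-sectoriality of $\Ns$; the interpolation and the large-modulus decompositions are then routine.  The borderline case $n=1$, where $s=\infty$ and the sharp trace embedding is unavailable, requires a small additional argument (working with an intermediate exponent $r<\infty$ and exploiting that $\Gamma$ is one-dimensional) but is handled in the same spirit.
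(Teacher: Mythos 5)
Your argument follows the paper's underlying mechanism closely — you use the same coercivity of Lemma~\ref{Lem: ellipticity with trace} and the same analytic semigroup bounds to obtain $\|\Ext(\e^{-z\Ns}f)\|_{\W^{1,2}(\Omega)}^2 \lesssim \Re(\Ns\,\e^{-z\Ns}f\SP \e^{-z\Ns}f)_{\L^2(\Gamma)} \lesssim |z|^{-1}\|f\|_{\L^2(\Gamma)}^2$ — but you then pass to $\L^q(\Gamma)$ differently. You invoke the endpoint trace embedding $\W^{1,2}(\Omega)\hookrightarrow \L^s(\Gamma)$ with $s=\nicefrac{2n}{n-1}$ and then interpolate between $\L^s$ and $\L^2$; the paper instead keeps only the bounded trace $\W^{1,2}(\Omega)\to\H^{\nicefrac{1}{2}}(\Gamma)$ and applies the fractional Gagliardo--Nirenberg inequality $\|g\|_{\L^q(\Gamma)}\lesssim\|g\|_{\H^{\nicefrac{1}{2}}(\Gamma)}^{1-\theta}\|g\|_{\L^2(\Gamma)}^\theta$. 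The paper is explicit that this choice is made precisely so the proof does not distinguish between $d=2$ and $d\geq 3$; your route recovers the earlier dimension-dependent proof from ter Elst--Ouhabaz.

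This is where the real gap lies. When $n=1$ you have $s=\infty$ and the endpoint embedding fails, and your proposed repair — replacing $s$ by an intermediate $r<\infty$ and then interpolating — does not produce the stated decay. Concretely, from $\|\e^{-z\Ns}f\|_{\L^r(\Gamma)}\lesssim |z|^{-\nicefrac{1}{2}}\|f\|_{\L^2}$ (any finite $r>2$, via $\H^{\nicefrac{1}{2}}(\Gamma)\hookrightarrow\L^r(\Gamma)$) and $\L^2$-contractivity, H\"older interpolation $\nicefrac{1}{q}=\nicefrac{(1-\theta)}{r}+\nicefrac{\theta}{2}$ gives $\|\e^{-z\Ns}f\|_{\L^q}\lesssim |z|^{-\nicefrac{(1-\theta)}{2}}\|f\|_{\L^2}$. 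To reach the exponent $\nicefrac{n}{q}-\nicefrac{n}{2}=\nicefrac{1}{q}-\nicefrac{1}{2}$ you would need $1-\theta=1-\nicefrac{2}{q}$, which together with the interpolation relation forces $r=\infty$. So the intermediate-exponent trick collapses back to the missing endpoint, and no genuinely \enquote{small additional argument} in this spirit closes the gap. What actually saves the day for $n=1$ is exactly the paper's Gagliardo--Nirenberg interpolation inequality, which is strictly stronger than the mere embedding $\H^{\nicefrac{1}{2}}(\Gamma)\hookrightarrow\L^q(\Gamma)$: it delivers the correct balance between the $\H^{\nicefrac{1}{2}}$ and $\L^2$ contributions and hence the correct power of $|z|$, uniformly in $n\geq 1$. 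You should either adopt that inequality directly, or restrict your proof to $n\geq 2$ and treat $n=1$ as a separate case with the correct tool. The remaining steps — the large-$|z|$ decomposition, the accretivity argument for contractivity of $\e^{-t\N}$, and the passage from complex to real time — are sound and essentially match the paper.
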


A proof for this result was already given in~\cite[Thm.~2.6]{tE-O14}. Here, we give a proof which does not distinguish between $d=2$ and $d\geq 3$.

\begin{proof}
	Fix $\varphi \in (0,\nicefrac{\pi}{2}-\theta)$, let $q\in (2,s] \cap (2,\infty)$, and put $1-\theta = -2n(\nicefrac{1}{q}-\nicefrac{1}{2})$. Then one has the fractional Gagliardo--Nirenberg inequality
	\begin{align}
	\label{Eq: GN}
		\| g \|_{\L^q(\Gamma)} \lesssim \| g \|_{\H^{\nicefrac{1}{2}}(\Gamma)}^{1-\theta} \| g \|_{\L^2(\Gamma)}^\theta \qquad (g\in \H^{\nicefrac{1}{2}}(\Gamma)).
	\end{align}
	For the Euclidean space, this inequality is well-known. Furthermore, it translates to $\Gamma$ using localization of the boundary.

	We aim to apply estimate~\eqref{Eq: GN} to $g= \e^{-z \Ns} f$, where $z\in \Sec_\varphi$ and $f\in \L^2(\Gamma)$. As a preparation, we investigate the term $\| \e^{-z \Ns} f\|_{\H^{\nicefrac{1}{2}}(\Gamma)}$. By definition of the harmonic lifting, one has $\e^{-z \Ns} f = \tr(\Ext(\e^{-z \Ns} f))$. Using Proposition~\ref{Prop: trace operator}, this leads to $\| \e^{-z \Ns} f \|_{\H^{\nicefrac{1}{2}}(\Gamma)} \lesssim \| \Ext(\e^{-z \Ns} f) \|_{\W^{1,2}(\Omega)}$. Now apply Lemma~\ref{Lem: ellipticity with trace} to derive
	\begin{align}
		\| \e^{-z \Ns} f\|_{\H^{\nicefrac{1}{2}}(\Gamma)}^2 \lesssim \| \Ext(\e^{-z \Ns} f) \|_{\W^{1,2}(\Omega)}^2 \lesssim \Re \fra(\Ext(\e^{-z \Ns} f), \Ext(\e^{-z \Ns} f)) + \omega \| \e^{-z \Ns} f \|_{\L^2(\Gamma)}^2.
	\end{align}
	Since $\e^{-z \Ns} f \in \dom(\N)$, the definition of $\N$ yields that the last term on the right-hand side above coincides with
	\begin{align}
		\Re (\N \e^{-z \Ns} f \SP \e^{-z \Ns} f)_{\L^2(\Gamma)} + \omega \| \e^{-z \Ns} f \|_{\L^2(\Gamma)}^2 = \Re (\Ns \e^{-z \Ns} f \SP \e^{-z \Ns} f)_{\L^2(\Gamma)}.
	\end{align}
	Using analyticity and contractivity of the semigroup reveals
	\begin{align}
		\Re (\Ns \e^{-z \Ns} f \SP \e^{-z \Ns} f)_{\L^2(\Gamma)} \lesssim |z|^{-1} \| f \|_{\L^2(\Gamma)}^2.
	\end{align}
	Altogether, this gives for all $z\in \Sec_\varphi$ the  estimate
	\begin{align}
		\| \e^{-z \Ns} f\|_{\H^{\nicefrac{1}{2}}(\Gamma)} \lesssim |z|^{-\nicefrac{1}{2}} \| f \|_{\L^2(\Gamma)}.
	\end{align}

	 Consequently, we derive from~\eqref{Eq: GN} with $g= \e^{-z \Ns} f$, using once more that the semigroup is contractive, that
	 \begin{align}
	 	\| \e^{-z \Ns} f \|_{\L^q(\Gamma)} \lesssim \| \e^{-z \Ns} f \|_{\H^{\nicefrac{1}{2}}(\Gamma)}^{1-\theta} \| \e^{-z \Ns} f \|_{\L^2(\Gamma)}^\theta \lesssim |z|^{\nicefrac{-(1-\theta)}{2}} \| f \|_{\L^2(\Gamma)}.
	 \end{align}
	 Observe that $\nicefrac{-(1-\theta)}{2} = \nicefrac{n}{q}-\nicefrac{n}{2}$. Hence, if $|z| \leq \diam(X)$, this is precisely the definition of $\L^2 \to \L^q$ boundedness. If $|z| \geq \diam(X)$, then $|z|^{\nicefrac{n}{q}-\nicefrac{n}{2}} \leq \diam(X)^{\nicefrac{n}{q}-\nicefrac{n}{2}}$.

	In particular, for  $ t > 0$,
	\begin{align}\label{eq:positive t}
		\| \e^{-t \N} \|_{\L^2\to \L^q} \lesssim (t \land D)^{\nicefrac{n}{q}-\nicefrac{n}{2}} \e^{t \omega},
	\end{align}
	where $D \coloneqq \diam(X)$. Hence,  for $t \leq D$, $\e^{t\omega} \leq \e^{D \omega}$, which gives $\L^2 \to \L^q$ boundedness. For $t > D$, we use the semigroup property, \eqref{eq:positive t}, and contractivity on $\L^2$, to estimate
	 \begin{align}
	 	\| \e^{-t \N} f \|_{\L^q(\Gamma)} &= \| \e^{-D  \N} \e^{-(t- D)  \N} f \|_{\L^q(\Gamma)} \\
	 	&\lesssim \e^{D\omega} D^{\nicefrac{n}{q}-\nicefrac{n}{2}} \| \e^{-(t- D)  \N} f \|_{\L^2(\Gamma)} \\
	 	&\leq \e^{D\omega} D^{\nicefrac{n}{q}-\nicefrac{n}{2}} \| f \|_{\L^2(\Gamma)},
	 \end{align}
	 which completes the case $t > 0$.
\end{proof}

\begin{remark}
\label{Rem: Hyper constants}
	For the operator $\Ns$, we actually proved an $\L^2 \to \L^q$ estimate in terms of $|z|^{\nicefrac{n}{q}-\nicefrac{n}{2}}$ for all $z\in \Sec_\varphi$, which is a better decay than $\diam(X)^{\nicefrac{n}{q}-\nicefrac{n}{2}}$ for large $|z|$.
\end{remark}

\section{Off-diagonal bounds follow from commutator estimates}
\label{Sec: Lipschitz ODE}

Throughout this section,  $\Omega \subseteq \R^{n+1}$ is a bounded Lipschitz domain and $\N$ the Dirichlet--to--Neumann operator as defined in Section~\ref{Subsec: Dir to Neu}, and $\Ns = \N + \omega$ is its shifted version. If $1\leq p \leq r \leq q\leq \infty$
are given, then we introduce the following set of assumptions.

\begin{assumption}{C}
\label{Ass: C-r}
	For any Lipschitz function $g$ on $\Gamma$, $\dom(\N)$ is invariant under multiplication by $g$, and the commutator $[\N, g] \coloneqq \N g - g\N$ satisfies the bound $\| [\N, g] f \|_r \lesssim \| \nabla g \|_\infty \| f\|_r$ for all $f \in \dom(\N) \cap \L^r(\Gamma)$.
\end{assumption}

\begin{remark}
\label{Rem: Assumption C}
	Observe that we do not require in Assumption~\ref{Ass: C-r} that  the commutator extends to a bounded operator on $\L^r(\Gamma)$. In fact, we cannot guarantee this, since density of $\dom(\N) \cap \L^r(\Gamma)$ in $\L^r(\Gamma)$ is not clear.
\end{remark}

\begin{example}[Shen's $\L^2$ commutator estimate]
\label{Ex: Shen gives commutator}
	  The respective estimate in the case $r=2$ is proved in ~\cite[Thm.~1.1]{Shen} for  real symmetric systems with H\"{o}lder continuous coefficients when $f$ and $g$ are Lipschitz functions on $\Gamma$. The estimate was extended to $f\in \dom(\N)$ in~\cite[Thm.~7.2]{PoissonBounds}, which in turn gives Assumption~\ref{Ass: C-r} for $r=2$.
\end{example}

\begin{example}[{$\L^p$ commutator estimate -- smooth domain}]
\label{Ex: Lp commutator}
	  Suppose that  $\Omega$ is bounded and $\Cont^{1 + \kappa}$ for some $\kappa > 0$, $m=1$ and the coefficients are real symmetric and  H\"{o}lder continuous. Then
	Assumption~\ref{Ass: C-r} holds for all $r \in (1, \infty)$. See~\cite[Thm.~7.3]{PoissonBounds}.
\end{example}

\begin{assumption}{H}
\label{Ass: Hyper-pqr}
	The family $\{ \e^{-t \N} \}_{t>0}$ is $\L^p \to \L^r$ and $\L^r \to \L^q$ bounded in the sense of Definition~\ref{Def: ODE}.
\end{assumption}

\begin{example}[Hypercontractivity for real equations]
\label{Ex: scalar gives hyper}
	Suppose that $\cL$ is associated with a real symmetric equation. Then Assumption~\ref{Ass: Hyper-pqr} holds  for all
	$1\leq p\leq r \leq   q \leq \infty$,
	see~\cite[Thm.~2.3]{PoissonBounds}.
	Note that in this setting, the semigroup $\{ \e^{-t \N} \}_{t\ge0}$ is sub-Markovian, and it is this property which allows to extends
	the $\L^p \to \L^q$ bounds in Proposition~\ref{Prop: hypercontractive} to all $p, q, r$ as above.
\end{example}

Now we state the main result of this section. We have already introduced the notation $r^{*}$ and $r_{*}$ and we recall  here  that $\nicefrac{1}{r^{*}} \le 0$ if $r \ge n$. In this case, the inequality $\nicefrac{1}{r} \geq \nicefrac{1}{q} > \nicefrac{1}{r^*}$ in the next theorem reduces to  $q \ge r$. We also recall that $\Ns$ is $m$-$\theta$-sectorial for some $\theta \in [0, \nicefrac{\pi}{2})$.

\begin{theorem}
\label{Thm: ODE from commutator bounds}
	Fix $p,q,r \in [1,\infty]$ such that $\nicefrac{1}{r_*} > \nicefrac{1}{p} \geq \nicefrac{1}{r} \geq \nicefrac{1}{q} > \nicefrac{1}{r^*}$.
	\begin{enumerate}
	\item  If Assumption~\ref{Ass: C-r} and Assumption~\ref{Ass: Hyper-pqr} are satisfied for  $p, q, r$ as above, then $\{ \e^{-t \N} \}_{t >0}$ satisfies $\L^p \to \L^q$ off-diagonal estimates of order $1$.
	\item  Let $\varphi \in [0, \nicefrac{\pi}{2}-\theta)$. If  Assumption~\ref{Ass: C-r} and Assumption~\ref{Ass: Hyper-pqr} are satisfied with $r = 2$, then $\{ \e^{-z \Ns} \}_{z\in \Sec_\varphi}$ satisfies $\L^p \to \L^q$ off-diagonal estimates of order $1$.
\end{enumerate}
	Implicit constants depend on $n$, $p$, $q$, $r$, and the implied constants from the Assumptions~\ref{Ass: C-r} and~\ref{Ass: Hyper-pqr} in the first assertion and also on $\varphi$ and $\theta$ in the second assertion.
\end{theorem}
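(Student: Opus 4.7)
The plan is to combine a Lipschitz cutoff with Duhamel's formula to express the off-diagonal piece of the semigroup via the commutator $[\N, g]$. Fix $E, F$ with $d \coloneqq \dist(E, F) > 0$; the case $d = 0$ reduces to plain $\L^p \to \L^q$ boundedness, which I would obtain by composing the two halves of Assumption~\ref{Ass: Hyper-pqr} through the semigroup property. I would pick a Lipschitz function $g \colon \Gamma \to [0, 1]$ with $g \equiv 0$ on $E$, $g \equiv 1$ on $F$, and $\|\nabla g\|_\infty \lesssim 1/d$; such $g$ exists as $\Gamma$ carries a Euclidean metric. Since $\ind_F g = \ind_F$ and $g \ind_E = 0$, one obtains $\ind_F \e^{-t\N} \ind_E f = \ind_F [g, \e^{-t\N}] \ind_E f$, and Duhamel's formula for the analytic semigroup yields
\[ \ind_F \e^{-t \N} \ind_E f = \int_0^t \ind_F \e^{-(t-s)\N} [\N, g] \e^{-s\N} \ind_E f \d s. \]

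For $s > 0$ and $f \in \L^p \cap \L^2$, the element $\e^{-s\N} \ind_E f$ lies in $\dom(\N) \cap \L^r(\Gamma)$ by the smoothing of the analytic semigroup on $\L^2$ together with the $\L^p \to \L^r$ bound of Assumption~\ref{Ass: Hyper-pqr}, so Assumption~\ref{Ass: C-r} applies pointwise in $s$ and yields $\| [\N, g] \e^{-s\N} \ind_E f \|_r \lesssim d^{-1} (s \wedge D)^{n/r - n/p} \|\ind_E f\|_p$, where $D = \diam(\Gamma)$. Combining this with the $\L^r \to \L^q$ bound for the outer semigroup, I arrive at
\[ \| \ind_F \e^{-t\N} \ind_E f \|_q \lesssim \frac{1}{d} \int_0^t ((t-s) \wedge D)^{n/q - n/r} (s \wedge D)^{n/r - n/p} \d s \cdot \|\ind_E f\|_p. \]
The hypotheses $\nicefrac{1}{p} < \nicefrac{1}{r_*}$ and $\nicefrac{1}{q} > \nicefrac{1}{r^*}$ put both exponents in $(-1, 0]$, so the integrand is locally integrable. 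A Beta-function computation for $t \leq D$, combined with a splitting of the domain of integration into $[0, D]$, $[D, t - D]$, $[t - D, t]$ when $t > D$, shows that the integral is bounded by $C t (t \wedge D)^{n/q - n/p}$ uniformly. Comparing with the plain bound $\| \ind_F \e^{-t\N} \ind_E f \|_q \lesssim (t \wedge D)^{n/q - n/p} \|\ind_E f\|_p$ and invoking $\min(1, t/d) \leq 2(1 + d/t)^{-1}$ delivers the claimed off-diagonal decay of order one.

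For part (ii), I would parametrize the segment $[0, z] \subset \C$ for $z \in \Sec_\varphi$ by $\tau \mapsto \tau z$, so that Duhamel reads $[\e^{-z\N}, g] = z \int_0^1 \e^{-(1-\tau) z \N} [\N, g] \e^{-\tau z \N} d\tau$. The outer factor is $\L^2 \to \L^q$ bounded on $\Sec_\varphi$ by Proposition~\ref{Prop: hypercontractive}, while the inner factor is $\L^p \to \L^2$ bounded on $\Sec_\varphi$ by duality (which is free for self-adjoint systems, i.e., real symmetric coefficients) or by a Cauchy-integral extension from the real-time bound of Assumption~\ref{Ass: Hyper-pqr}. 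Assumption~\ref{Ass: C-r} with $r = 2$ applies to $\e^{-\tau z \N} \ind_E f$, and the remainder is identical to part~(i) with $t$ replaced by $|z|$.

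The main obstacle is the estimation of the Beta-type time integral in the regime $t > D$: one must show that despite the truncation at $D$, the correct prefactor $(t \wedge D)^{n/q - n/p}$ is recovered with only a linear loss in $t$, which is precisely what produces the decay of order one. A secondary difficulty in part~(ii) is to justify $\L^p \to \L^2$ bounds uniformly on the subsector of analyticity; this is where the restriction $r = 2$ enters crucially, together with Proposition~\ref{Prop: hypercontractive}.
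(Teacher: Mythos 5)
Your part (i) is correct and uses the same core computation as the paper (Duhamel representation of the commutator, then a Beta-type time integral), but the packaging is genuinely different. Where the paper introduces the conjugated operator $N_{\alpha,g} = (1+\alpha g)\,\N\,(1+\alpha g)^{-1}$ with $g = \dist(\cdot,E)$, chooses $\alpha = 1/t$, and reads the decay directly off $\ind_F(1+g/t)^{-1} \leq (1+\dist(E,F)/t)^{-1}$, you take a normalized cutoff $g\in[0,1]$ vanishing on $E$, equal to $1$ on $F$, with $\|\nabla g\|_\infty \lesssim \dist(E,F)^{-1}$, use the algebraic identity $\ind_F\,\e^{-t\N}\,\ind_E = \ind_F\,[g,\e^{-t\N}]\,\ind_E$, and then combine the resulting gain $t/\dist(E,F)$ with the plain $\L^p\to\L^q$ bound via $\min(1,t/d) \lesssim (1+d/t)^{-1}$. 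Your route avoids the similarity transform and is arguably more elementary; the paper's route produces the decay factor in one stroke and is structured so that one could imagine iterating it (the paper's Problem~1). Two small repairs: (a) justification of the Duhamel identity requires $g\,\e^{-s\N}\ind_E f\in\dom(\N)$, which is supplied precisely by the invariance clause of Assumption~\ref{Ass: C-r} --- you should say so, as the paper does; (b) your split of the time integral into $[0,D]$, $[D,t-D]$, $[t-D,t]$ is a genuine decomposition only when $t>2D$; the regime $D<t<2D$ needs a small tweak (the paper instead splits at $t/2$ and treats the $D$-truncation inside each half).

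Part (ii) has a real gap. To get uniform $\L^p\to\L^2$ bounds for $\e^{-z\Ns}$ on $\Sec_\varphi$, you offer either duality or ``a Cauchy-integral extension.'' Duality needs $\N$ self-adjoint (or at least matching hypercontractive bounds for the adjoint semigroup), but the theorem assumes only Assumptions~\ref{Ass: C-r} and~\ref{Ass: Hyper-pqr} with $r=2$ plus $m$-$\theta$-sectoriality; the Cauchy-integral idea is not developed enough to be checked. The paper's cleaner argument writes $z = \delta t + z_0$ with $t=\Re z$, $z_0\in\Sec_{\varphi'}$ for a slightly larger $\varphi'<\nicefrac{\pi}{2}-\theta$, and $\delta$ depending only on $\varphi,\varphi'$; then $\e^{-z\Ns}=\e^{-z_0\Ns}\e^{-\delta t\Ns}$, so the real-time $\L^p\to\L^2$ bound from Assumption~\ref{Ass: Hyper-pqr} composed with $\L^2$-contractivity of $\e^{-z_0\Ns}$ gives the uniform bound on $\Sec_\varphi$ (and symmetrically for $\L^2\to\L^q$). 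With these in hand, the part (i) argument is rerun for the rotated generator $e^{i\nu}\Ns$, $|\nu|<\varphi$, which still satisfies Assumption~\ref{Ass: C-r} since $[e^{i\nu}\Ns, g] = e^{i\nu}[\N,g]$.
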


\begin{proof}
	For convenience, abbreviate $D=\diam(\Gamma)$ throughout the proof.
Let $\alpha > 0$ and $g$ a positive Lipschitz function on $\Gamma$, both to be specified in the course of this proof.
We start with the proof of (i). To this end, define the operator
	\begin{align}
		N_{\alpha,g} \coloneqq (1+\alpha g) \N (1+\alpha g)^{-1}.
	\end{align}
	Note that $N_{\alpha,g}$ is by construction similar to $\N$, and in particular, $N_{\alpha,g}$ generates an analytic semigroup on $\L^2(\Gamma)$.

	By similarity, we have for $f\in \L^2(\Gamma)$ that
	\begin{align}
	\label{Eq: similar semigroup decomposition}
	\begin{split}
		\e^{-t N_{\alpha,g}} f &= (1+\alpha g) \e^{-t \N} (1+\alpha g)^{-1} f \\
		&=  \Bigl( [1 + \alpha g, \e^{-t \N}] +  \e^{-t \N} (1+ \alpha g) \Bigr) (1+\alpha g)^{-1}f \\
		&=  \alpha [g, \e^{-t \N} ] (1+\alpha g)^{-1}f + \e^{-t \N} f.
	\end{split}
	\end{align}

	To estimate the first term on the right-hand side of~\eqref{Eq: similar semigroup decomposition}, we claim the identity
	\begin{align}
	\label{Eq: semigroup commutator}
		[g, \e^{-t \N}] h =  \int_0^t \e^{-(t-s) \N} [\N, g] \e^{-s \N} h \d s \qquad (h\in \L^2(\Gamma)).
	\end{align}
	Indeed, keep in mind that $-\N \e^{-s \N} h = \partial_s \e^{-s \N} h$ and use integration by parts to find
	\begin{align}
		-\int_0^t \e^{-(t-s) \N} g \N \e^{-s \N} h \d s &= \e^{-(t-s) \N} g \e^{-s \N} h \Big|_{s=0}^{s=t} - \int_0^t \N \e^{-(t-s) \N} g \e^{-s \N} h \d s \\
		&= [g, \e^{-t \N}] h - \int_0^t \e^{-(t-s) \N} \N g \e^{-s \N} h \d s.
	\end{align}
	In the last step, we have used that $g \e^{-s \N} h \in \dom(\N)$, which is a consequence of %
	the invariance property in Assumption~\ref{Ass: C-r}.
	Rearranging terms gives~\eqref{Eq: semigroup commutator}.

	Now, take  $f$  from $\L^p(\Gamma) \cap \L^2(\Gamma)$ and let $s > 0$. Then again $(1+\alpha g)^{-1}f \in \L^p(\Gamma) \cap \L^2(\Gamma)$, and by Assumption~\ref{Ass: Hyper-pqr} we see that $\e^{-s \N}(1+\alpha g)^{-1}f \in \dom(\N) \cap \L^r(\Gamma)$. In particular, Assumption~\ref{Ass: C-r} applies to the function $\e^{-s \N}(1+\alpha g)^{-1}f$.\\
	 We take the  $\L^q$-norm in ~\eqref{Eq: similar semigroup decomposition}. For the first part of its right-hand side we  use~\eqref{Eq: semigroup commutator}, Assumption~\ref{Ass: Hyper-pqr} twice, and Assumption~\ref{Ass: C-r} %
to give
	\begin{align}
	\begin{split}
		\| \alpha [g, \e^{-t \N} ] (1+\alpha g)^{-1}f \|_q \leq{} & \alpha \int_0^t \| \e^{-(t-s) \N} [\N, g] \e^{-s \N} (1+\alpha g)^{-1}f \|_q \d s \\
		\lesssim{} &\alpha \int_0^t ((t-s) \wedge D)^{\nicefrac{n}{q}-\nicefrac{n}{r}} \| [\N, g] \e^{-s \N} (1+\alpha g)^{-1}f \|_r \d s\\
		 \lesssim{} & \alpha \| \nabla g \|_\infty \int_0^t ((t-s) \wedge D)^{\nicefrac{n}{q}-\nicefrac{n}{r}} \| \e^{-s \N} (1+\alpha g)^{-1}f \|_r \d s \\
		\lesssim{} &\alpha \| \nabla g \|_\infty \int_0^t ((t-s) \wedge D)^{\nicefrac{n}{q}-\nicefrac{n}{r}} (s \wedge D)^{\nicefrac{n}{r}-\nicefrac{n}{p}} \d s \| (1+\alpha g)^{-1}f \|_p\\
		\leq {} & \alpha \| \nabla g \|_\infty \int_0^t ((t-s) \wedge D)^{\nicefrac{n}{q}-\nicefrac{n}{r}} (s \wedge D)^{\nicefrac{n}{r}-\nicefrac{n}{p}} \d s \| f \|_p.
	\end{split}
	\end{align}
	Split the integral in the  latest inequality  as $\int_0^t = \int_0^{\nicefrac{t}{2}} + \int_{\nicefrac{t}{2}}^t$. In the former case, $t-s \geq t/2$, so we can bound (up to a constant) $((t-s) \wedge D)^{\nicefrac{n}{q}-\nicefrac{n}{r}}$ by $(t \wedge D)^{\nicefrac{n}{q}-\nicefrac{n}{r}}$. The remaining integral is then split a second time, and due to the restriction $p>r_*$ we can give
	\begin{align}
		\int\limits_0^{\nicefrac{t}{2} \wedge D} (s \wedge D)^{\nicefrac{n}{r}-\nicefrac{n}{p}} \d s + \int\limits_{\nicefrac{t}{2} \wedge D}^{\nicefrac{t}{2}} (s \wedge D)^{\nicefrac{n}{r}-\nicefrac{n}{p}} \d s
		&\leq \int\limits_0^{\nicefrac{t}{2} \wedge D} s^{\nicefrac{n}{r}-\nicefrac{n}{p}} \d s + \int\limits_{\nicefrac{t}{2} \wedge D}^{\nicefrac{t}{2}} (\nicefrac{t}{2} \wedge D)^{\nicefrac{n}{r}-\nicefrac{n}{p}} \d s \\
		&\lesssim (t \wedge D)^{\nicefrac{n}{r}-\nicefrac{n}{p}+1} + t(t \wedge D)^{\nicefrac{n}{r}-\nicefrac{n}{p}}.
	\end{align}
	In total, we obtain the bound
	\begin{align}
		\int_0^{\nicefrac{t}{2}} ((t-s) \wedge D)^{\nicefrac{n}{q}-\nicefrac{n}{r}} (s \wedge D)^{\nicefrac{n}{r}-\nicefrac{n}{p}} \d s \lesssim t(t \wedge D)^{\nicefrac{n}{q}-\nicefrac{n}{p}}.
	\end{align}
	The integral $\int_{\nicefrac{t}{2}}^t$ has the same estimate in virtue of the restriction $\frac{1}{q} > \frac{1}{r^*}$. In summary,
	\begin{align}
		\| \alpha [g, \e^{-t \N} ] (1+\alpha g)^{-1}f \|_q \lesssim \alpha \| \nabla g \|_\infty t (t \wedge D)^{\nicefrac{n}{q}-\nicefrac{n}{p}} \| f \|_p \qquad (f\in \L^p(\Gamma) \cap \L^2(\Gamma)).
	\end{align}
	For the second term on the right-hand side of~\eqref{Eq: similar semigroup decomposition}, compose the $\L^p\to \L^r$ with the $\L^r\to \L^q$ boundedness of
	$\{ \e^{-t \N} \}_{t>0}$ to conclude $\L^p\to \L^q$ boundedness of that operator family. Combining both estimates, we find
	\begin{align}
	\label{Eq: adjoint perturbed semigroup family}
		\| \e^{-t N_{\alpha,g}} f \|_q \lesssim (1+ \alpha \| \nabla g \|_\infty t) (t \wedge D)^{\nicefrac{n}{q}-\nicefrac{n}{p}} \| f \|_p  \qquad (f\in \L^p(\Gamma) \cap \L^2(\Gamma)).
	\end{align}

	 Now, let  $E,F \subseteq \Gamma$ be measurable, $f\in \L^p \cap \L^2$, and put $g = \dist(\argdot, E)$. For $t > 0$, set $S(t) \coloneqq \e^{-t N_{\nicefrac{1}{t},g}}$. Recall that
	 $S(t) = (1+\nicefrac{g}{t}) \e^{-t \N} (1+\nicefrac{g}{t})^{-1}$.
	 By definition of $S(t)$ and by choice of $g$ along with the support property, we have
	\begin{align}
	\label{Eq: similarity with indicator functions}
		\ind_F \e^{-t \N} \ind_E f = \ind_F (1+\nicefrac{g}{t})^{-1} S(t) (1+\nicefrac{g}{t}) \ind_E f = \ind_F (1+\nicefrac{g}{t})^{-1} S(t) \ind_E f.
	\end{align}
	We take  the $\L^q$-norm in~\eqref{Eq: similarity with indicator functions} and use~\eqref{Eq: adjoint perturbed semigroup family} with $\alpha = \nicefrac{1}{t}$ together with
	 the crude estimate $\| \ind_F (1+\nicefrac{g}{t})^{-1} \|_\infty \leq (1+\nicefrac{\dist(E,F)}{t})^{-1}$  to derive
	\begin{align}
		\| \ind_F \e^{-t \N} \ind_E f \|_q \leq (1+\nicefrac{\dist(E,F)}{t})^{-1} \| S(t) \ind_E f \|_q \lesssim (t \wedge D)^{\nicefrac{n}{q}-\nicefrac{n}{p}} (1+\nicefrac{\dist(E,F)}{t})^{-1} \| \ind_E f \|_p,
	\end{align}
	which concludes the proof of the first assertion. Note that the implicit constants depend only on $n$, $p$, $q$, $r$, and the implied constants from the Assumptions~\ref{Ass: C-r} and~\ref{Ass: Hyper-pqr}.

	Now we prove (ii). Let $\varphi' \in (\varphi, \nicefrac{\pi}{2} - \theta)$. There exists $\delta > 0$ depending only on $\varphi$ and $\varphi'$ such that for every $z \in \Sec_\varphi$ we have $z = \delta t + z_0$ with $t = \Re z$ and $z_0 \in \Sec_{\varphi'}$. Hence,
	$\e^{-z\Ns} =  \e^{-\delta t \Ns} \e^{-z_0\Ns} = \e^{-z_0\Ns} \e^{-\delta t \Ns}$. From this and the fact that $\e^{-z_0\Ns}$ is a contraction on $L^2(\Gamma)$, it follows that
	\begin{align}
\label{eq:p-2-q}
		\|  \e^{-z \Ns}  f \|_q  \lesssim  (t \land D)^{\nicefrac{n}{q}-\nicefrac{n}{2}} \| \e^{-z_0\Ns}f \|_2 \lesssim (| z | \land D)^{\nicefrac{n}{q}-\nicefrac{n}{2}}
		\| f \|_2,
	\end{align}
	and similarly,
	\begin{align}
	\label{eq:p-2-q2}
		\|  \e^{-z \Ns}  f \|_2    \lesssim (| z | \land D)^{\nicefrac{n}{2}-\nicefrac{n}{p}}
		\| f \|_p.
	\end{align}
	This means that $\{ \e^{-z \Ns}\}_{z \in  \Sec_\varphi}$ satisfies Assumption~\ref{Ass: Hyper-pqr} with $r= 2$. In other words, if $| \nu | < \varphi$ and
	$\Ns' = \e^{i \nu} \Ns$, then $\{ \e^{-t \Ns'}\}_{t>0}$ satisfies this assumption. Note also that
	$$[ \Ns', g ] = \e^{i\nu} [\N, g]$$
	which shows that $\Ns'$ satisfies Assumption~\ref{Ass: C-r} with $r= 2$. Now we can repeat the proof of the first assertion with $\N$ replaced by $\Ns'$ and we obtain the desired $\L^p \to \L^q$ off-diagonal bounds of order $1$ for $\{ \e^{-z \Ns}\}_{z \in  \Sec_\varphi}$.
	\end{proof}

\begin{corollary}[Real equations on Lipschitz domains]
\label{Cor: scalar}
	Assume that $\N$ is associated with a real equation.
	Let $r \in [1,\infty]$ be such that Assumption~\ref{Ass: C-r} holds, and let $p,q\in [1,\infty]$ be such that $\nicefrac{1}{r_*} > \nicefrac{1}{p} \geq \nicefrac{1}{r} \geq \nicefrac{1}{q} > \nicefrac{1}{r^*}$.
	Then $\{ \e^{-t\N} \}_{t>0}$ satisfies $\L^p \to \L^q$ off-diagonal estimates of order $1$.
\end{corollary}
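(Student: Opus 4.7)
The plan is to recognize this corollary as a direct application of Theorem~\ref{Thm: ODE from commutator bounds}(i), once the relevant hypotheses are checked against the tools available in the real-equation setting. The conclusion of Theorem~\ref{Thm: ODE from commutator bounds}(i) is exactly the $\L^p \to \L^q$ off-diagonal estimate of order $1$ that we want, so the task reduces to verifying Assumption~\ref{Ass: C-r} and Assumption~\ref{Ass: Hyper-pqr} for the triple $p,q,r$.

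First, Assumption~\ref{Ass: C-r} at the given $r$ is part of the hypothesis of the corollary, so there is nothing to do there. Second, since $\N$ is associated with a real (scalar) equation, Example~\ref{Ex: scalar gives hyper} applies and provides $\L^p \to \L^r$ and $\L^r \to \L^q$ boundedness of $\{\e^{-t\N}\}_{t>0}$ for every ordering $1 \le p \le r \le q \le \infty$; in particular this holds for the triple $p,q,r$ fixed in the corollary, since the assumption $\nicefrac{1}{r_*} > \nicefrac{1}{p} \geq \nicefrac{1}{r} \geq \nicefrac{1}{q} > \nicefrac{1}{r^*}$ forces $p \le r \le q$. Therefore Assumption~\ref{Ass: Hyper-pqr} holds for our $p,q,r$.

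With both assumptions in place, and with the numerology $\nicefrac{1}{r_*} > \nicefrac{1}{p} \geq \nicefrac{1}{r} \geq \nicefrac{1}{q} > \nicefrac{1}{r^*}$ being exactly the standing hypothesis of Theorem~\ref{Thm: ODE from commutator bounds}, part (i) of that theorem yields the claimed $\L^p \to \L^q$ off-diagonal estimate of order $1$ for $\{\e^{-t\N}\}_{t>0}$. There is no genuine obstacle here: the corollary is essentially a packaging statement which isolates, from among the two input hypotheses of the theorem, the one whose validity is automatic in the real scalar case (Assumption~\ref{Ass: Hyper-pqr}, via sub-Markovianity and Example~\ref{Ex: scalar gives hyper}), leaving only the commutator condition as an effective assumption.
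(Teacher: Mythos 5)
Your proposal is correct and follows exactly the paper's own proof: appeal to Theorem~\ref{Thm: ODE from commutator bounds}(i), note that Assumption~\ref{Ass: C-r} is given by hypothesis, and verify Assumption~\ref{Ass: Hyper-pqr} via Example~\ref{Ex: scalar gives hyper} using sub-Markovianity in the real scalar case. Nothing to add.
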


\begin{proof}
We appeal to Theorem~\ref{Thm: ODE from commutator bounds}. Hence, we only have to check Assumption~\ref{Ass: Hyper-pqr}. This is verified according to Example~\ref{Ex: scalar gives hyper}.
\end{proof}

\begin{corollary}[Real systems on Lipschitz domains]
\label{Cor: system}
	Assume that $\N$ is associated with a real system on a Lipschitz domain whose coefficients are symmetric and H\"{o}lder continuous.
	Let $\varphi \in [0, \nicefrac{\pi}{2})$, and let $1< p \leq 2 \leq q < \infty$ be such that $\nicefrac{1}{p} \le \nicefrac{1}{s'}$ and $\nicefrac{1}{q} \ge \nicefrac{1}{s}$, where $\nicefrac{1}{s} = \nicefrac{1}{2} - \nicefrac{1}{2n}$. Then $\{\e^{-t\N}\}_{t>0} $ and $\{ \e^{-z\Ns} \}_{z\in \Sec_\varphi}$ satisfy $\L^p \to \L^q$ off-diagonal estimates of order $1$.
\end{corollary}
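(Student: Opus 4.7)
The plan is to invoke Theorem~\ref{Thm: ODE from commutator bounds} with the choice $r = 2$. Before verifying the two Assumptions, I would check that the arithmetic side condition $\nicefrac{1}{r_*} > \nicefrac{1}{p} \geq \nicefrac{1}{r} \geq \nicefrac{1}{q} > \nicefrac{1}{r^*}$ is automatic. With $r = 2$ it reads $\nicefrac{1}{p} < \nicefrac{1}{2} + \nicefrac{1}{n}$ and $\nicefrac{1}{q} > \nicefrac{1}{2} - \nicefrac{1}{n}$, and these are strictly weaker than the hypotheses $\nicefrac{1}{p} \leq \nicefrac{1}{s'} = \nicefrac{1}{2} + \nicefrac{1}{2n}$ and $\nicefrac{1}{q} \geq \nicefrac{1}{s} = \nicefrac{1}{2} - \nicefrac{1}{2n}$ of the corollary.

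Next I would verify Assumption~\ref{Ass: C-r} and Assumption~\ref{Ass: Hyper-pqr} for $r = 2$. The commutator estimate Assumption~\ref{Ass: C-r} is exactly the Shen bound applicable in this real-symmetric and H\"older-continuous setting, as recorded in Example~\ref{Ex: Shen gives commutator}. For Assumption~\ref{Ass: Hyper-pqr}, the $\L^2 \to \L^q$ boundedness of $\{\e^{-t \N}\}_{t>0}$ is immediate from Proposition~\ref{Prop: hypercontractive} since $q \leq s$. For $\L^p \to \L^2$, the real symmetry of the coefficients makes the form $\fra$ symmetric and therefore $\N$ self-adjoint on $\L^2(\Gamma)$, so $\e^{-t\N}$ coincides with its $\L^2$-adjoint. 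Duality then converts $\L^p \to \L^2$ boundedness into $\L^2 \to \L^{p'}$ boundedness of the same semigroup, which in turn follows from Proposition~\ref{Prop: hypercontractive} precisely because $p \geq s'$ means $p' \leq s$; one checks that the power of $t$ produced by duality, namely $\nicefrac{n}{2} - \nicefrac{n}{p}$, agrees with the one supplied by the proposition. Together, part~(i) of Theorem~\ref{Thm: ODE from commutator bounds} then yields the real-time statement.

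For the sector statement with $\varphi \in [0, \nicefrac{\pi}{2})$, I would apply part~(ii) of the same theorem, which a priori only delivers off-diagonal estimates on sectors $\Sec_\varphi$ with $\varphi < \nicefrac{\pi}{2} - \theta$. The key point, which is the only place where the argument is not entirely mechanical, is that self-adjointness of $\N$ in the real symmetric case forces the numerical range of $\Ns$ to be real; consequently $\theta = 0$, so that the admissible range $\varphi < \nicefrac{\pi}{2} - \theta$ coincides with the range $\varphi < \nicefrac{\pi}{2}$ appearing in the statement of the corollary. With this, part~(ii) yields the complex-time assertion and closes the proof. The main obstacle, if one is to be identified, is thus not analytic but structural: making sure that the trace-elliptic form framework indeed produces a self-adjoint $\N$ from a symmetric form, which is standard but needs to be spelled out to justify the use of $\theta = 0$ in part~(ii).
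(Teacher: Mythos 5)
Your proof is correct and takes essentially the same route as the paper's: verify the arithmetic side conditions for $r=2$, obtain Assumption~\ref{Ass: C-r} from Shen's commutator estimate via Example~\ref{Ex: Shen gives commutator}, and obtain Assumption~\ref{Ass: Hyper-pqr} from Proposition~\ref{Prop: hypercontractive} for $\L^2 \to \L^q$ and by duality for $\L^p \to \L^2$. The one point you spell out which the paper leaves implicit is that the real symmetric hypothesis makes the form $\fra$ symmetric, hence $\N$ self-adjoint, hence $\theta = 0$, so that the constraint $\varphi < \nicefrac{\pi}{2} - \theta$ in Theorem~\ref{Thm: ODE from commutator bounds}(ii) becomes the full range $\varphi < \nicefrac{\pi}{2}$ claimed in the corollary; this is a worthwhile clarification rather than a deviation.
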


\begin{proof}
	First, observe that $\nicefrac{1}{s} > \nicefrac{1}{2^*}$, and hence $\nicefrac{1}{q} \geq \nicefrac{1}{s} > \nicefrac{1}{2^*}$. Similarly, $\nicefrac{1}{p} < \nicefrac{1}{2_*}$. Consequently, we can appeal to Theorem~\ref{Thm: ODE from commutator bounds} with these choices of $p$ and $q$ when $r=2$.

	It remains to verify Assumption~\ref{Ass: C-r} and Assumption~\ref{Ass: Hyper-pqr}, both with $r=2$. Assumption~\ref{Ass: C-r} is directly provided by Shen's result mentioned in Example~\ref{Ex: Shen gives commutator}.
Next,
the $\L^2\to \L^q$ boundedness
in Assumption~\ref{Ass: Hyper-pqr}
is a direct consequence of Proposition~\ref{Prop: hypercontractive}, whereas the $\L^p \to \L^2$ boundedness follows by duality.
\end{proof}

\section{Scalar equations in sufficiently smooth domains}
\label{Sec: scalar and smooth}
In this section we consider scalar equations  and slightly smoother domains. We shall see that optimal $\L^p \to \L^q$ off-diagonal estimates are valid for $\Cont^{1+\kappa}$-domains. The approach is based on Poisson bounds for the heat kernel of the Dirichlet--to--Neumann operator.

We write again $D = \diam(\Gamma)$. The following theorem is shown in~\cite[Thm.~1.1]{PoissonBounds}.

\begin{theorem}[Poisson kernel bounds for $\e^{-t\N}$]
\label{Thm: poisson kernel bounds}
	Suppose that $\Omega \subseteq \R^{n+1}$ is a bounded domain with $\Cont^{1+\kappa}$-boundary $\Gamma$ for some $\kappa>0$, and suppose that $\cL$ is associated with a real symmetric equation with H\"{o}lder continuous coefficients. Let $t > 0$. Then $\e^{-t \N}$ is given by a kernel $K(t,x,y)$ which fulfills the bound
	\begin{align}
	\label{Eq: poisson kernel bound}
		|K(t,x,y)| \lesssim (t\land D)^{-n} \left(1+\frac{|x-y|}{t} \right)^{-n-1} \qquad (x,y \in \Gamma).
	\end{align}
\end{theorem}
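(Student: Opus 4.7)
My approach is to combine an unweighted pointwise kernel bound of size $(t \wedge D)^{-n}$ with polynomial off-diagonal decay of order $n+1$, which together give precisely the Poisson-type bound~\eqref{Eq: poisson kernel bound}. The unweighted bound is supplied by hypercontractivity, while the polynomial decay is obtained by iterating the commutator argument from Section~\ref{Sec: Lipschitz ODE}, exploiting that the stronger $\L^p$-commutator estimate of Example~\ref{Ex: Lp commutator} is available in the $\Cont^{1+\kappa}$ scalar setting. A pointwise kernel bound is then extracted by Lebesgue differentiation.

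\textbf{Step 1 (pointwise size $(t\wedge D)^{-n}$).} Since $\cL$ is a real symmetric scalar equation, the semigroup $\{\e^{-t\N}\}_{t\ge 0}$ is sub-Markovian (Example~\ref{Ex: scalar gives hyper}), hence extrapolates to contractions on $\L^p(\Gamma)$ for every $p\in[1,\infty]$. Combining Proposition~\ref{Prop: hypercontractive} with duality and Riesz--Thorin interpolation yields $\L^1\to \L^\infty$ boundedness with decay $(t\wedge D)^{-n}$. The Dunford--Pettis theorem then provides a measurable kernel $K(t,x,y)$ with $\|K(t,\cdot,\cdot)\|_\infty \lesssim (t\wedge D)^{-n}$, which already handles the regime $|x-y| \lesssim t$.

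\textbf{Step 2 (off-diagonal decay of order $n+1$).} The $\L^p$-commutator bound $\|[\N,g]f\|_p \lesssim \|\nabla g\|_\infty \|f\|_p$ available for all $p\in(1,\infty)$ (Example~\ref{Ex: Lp commutator}) allows the similarity transform argument of Theorem~\ref{Thm: ODE from commutator bounds} to be run between any pair of exponents. A single conjugation by $(1+\alpha g)$ produces order-$1$ decay; I would run the argument with the $(n+1)$-fold similarity $(1+\alpha g)^{n+1}\,\e^{-t\N}\,(1+\alpha g)^{-(n+1)}$, expand binomially, and use~\eqref{Eq: semigroup commutator} together with the Leibniz-type identity for iterated commutators. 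With the optimization $\alpha = 1/t$ and the choice $g = \dist(\argdot, E)$, the support condition forces $\ind_F(1+g/t)^{-(n+1)} \le (1+\dist(E,F)/t)^{-(n+1)}$, and the endpoint case $p=1$, $q=\infty$ gives
\[
\| \ind_F\, \e^{-t\N}\, \ind_E f \|_\infty \lesssim (t\wedge D)^{-n}\Bigl(1 + \tfrac{\dist(E,F)}{t}\Bigr)^{-(n+1)} \|\ind_E f\|_1.
\]
Specializing $E$ and $F$ to small balls around $y$ and $x$ and passing to the limit via Lebesgue differentiation produces the pointwise bound~\eqref{Eq: poisson kernel bound}.

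\textbf{Main obstacle.} The hard part is the iteration in Step~2: each conjugation produces not only the leading $[\N,g]$ term but also multi-commutators $[[\cdots[\N,g],g]\cdots,g]$, whose $\L^p$-bounds must be controlled uniformly up to order $n+1$. This is exactly the role played by the fact that the commutator estimate of Example~\ref{Ex: Lp commutator} holds on the full range $p\in(1,\infty)$, since intermediate terms in the iteration must be estimated at exponents progressively far from $2$. Bookkeeping the powers of $\alpha\|\nabla g\|_\infty t$ and matching them against the decay to obtain exactly order $n+1$ at the end is the most delicate calculation. As an alternative, one could bypass Step~2 entirely and invoke the classical Poisson-kernel estimates for solutions of elliptic boundary problems on $\Cont^{1+\kappa}$ domains with Hölder coefficients (via Schauder theory and layer potentials), combined with a Cauchy integral representation $\e^{-t\N} = \frac{1}{2\pi \ii}\int_\Gamma \e^{-t\lambda}(\lambda+\N)^{-1}\,\d\lambda$; however, that route is heavier and less aligned with the commutator-based strategy already developed in Section~\ref{Sec: Lipschitz ODE}.
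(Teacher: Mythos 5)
This theorem is not proved in the paper at all: the paper states \enquote{The following theorem is shown in~\cite[Thm.~1.1]{PoissonBounds}} and simply cites it. The reference \cite{PoissonBounds} obtains the Poisson bound by elliptic regularity and kernel representation methods on $\Cont^{1+\kappa}$ domains --- essentially the \enquote{heavier} alternative you mention at the end of your proposal, not the commutator route.

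The more serious issue is that your Step~2 does not actually go through, and the authors themselves say as much. Problem~1 in the introduction states that obtaining off-diagonal decay of order larger than $1$ via iterated (multi-)commutator estimates is \emph{open}, and the single-commutator bound that is available is genuinely insufficient for your iteration. Concretely: conjugating by $(1+\alpha g)^{n+1}$ and running Duhamel leads, through the identity $[\N,(1+\alpha g)^{k}]=\sum_{j=0}^{k-1}(1+\alpha g)^{j}[\N,1+\alpha g](1+\alpha g)^{k-1-j}$, to intermediate factors of size $(1+\alpha g)^{k-1}$, which on $\Gamma$ are only controlled by $(1+\alpha D)^{k-1}$ with $D=\diam(\Gamma)$. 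After optimising $\alpha=\nicefrac{1}{t}$, this growth precisely offsets the extra decay one hopes to extract from $\ind_F(1+\nicefrac{g}{t})^{-(n+1)}\le(1+\nicefrac{\dist(E,F)}{t})^{-(n+1)}$; the net gain collapses back to order $1$ (and for $t\ll D$ the bound even deteriorates). The alternative \enquote{expand binomially and use Leibniz for iterated commutators} requires bounds on $[[\N,g],g]$, $[[[\N,g],g],g]$, etc., which are not provided by Assumption~\ref{Ass: C-r} or Example~\ref{Ex: Lp commutator}, and which are not known. There is also a likely circularity worth flagging: the $\L^{p}$-commutator estimates of \cite[Thm.~7.3]{PoissonBounds} (Example~\ref{Ex: Lp commutator}) are established in a paper whose main input is this very Poisson bound, so invoking them to prove the Poisson bound would presumably be begging the question. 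Step~1 (hypercontractivity, duality and interpolation giving $\L^1\to\L^\infty$ boundedness of size $(t\wedge D)^{-n}$, and Dunford--Pettis for the kernel) is fine, but this only treats the near-diagonal regime; the off-diagonal decay of exact order $n+1$ is precisely the part that cannot be reached by the commutator machinery developed in Section~\ref{Sec: Lipschitz ODE}.
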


We state the next results  for  $\{\e^{-t\N}\}_{t>0}$ instead of $\{\e^{-z\Ns}\}_{z\in \Sec_\varphi}$  for clarity  of exposition. Note that
the Poisson bound  \eqref{Eq: poisson kernel bound} is also  valid for the kernel of $\e^{-z \Ns}$, $z \in \Sec_\varphi$, for any fixed
$\varphi \in (0, \nicefrac{\pi}{2})$, see \cite{ComplexPoisson}. One uses then the same approach to deal with
$\L^p \to \L^q$ off-diagonal bounds.

\begin{corollary}[$\L^1\to \L^\infty$ off-diagonal estimates]
\label{Cor: L1 to Linfty}
	In the situation of Theorem~\ref{Thm: poisson kernel bounds}, the family $\{ \e^{-t\N} \}_{t>0}$ satisfies $\L^1\to \L^\infty$ off-diagonal estimates of order $n+1$.
\end{corollary}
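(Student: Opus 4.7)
The plan is essentially to read the off-diagonal estimate directly off the pointwise Poisson kernel bound from Theorem~\ref{Thm: poisson kernel bounds}. With $p=1$ and $q=\infty$, the prefactor $(t\wedge D)^{\nicefrac{n}{q}-\nicefrac{n}{p}}$ in Definition~\ref{Def: ODE} specializes to $(t\wedge D)^{-n}$, which is precisely the prefactor appearing in~\eqref{Eq: poisson kernel bound}. So I only need to track how the spatial decay factor interacts with the geometry of $E$ and $F$.

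Concretely, fix measurable sets $E, F\subseteq \Gamma$ and $f\in \L^1(\Gamma)\cap \L^2(\Gamma)$. Since $\e^{-t\N}$ acts through the kernel $K(t,\argdot,\argdot)$, for every $x\in F$ I can write
\begin{align}
  |(\ind_F\,\e^{-t\N}\ind_E f)(x)| \leq \int_E |K(t,x,y)|\,|f(y)|\d\sigma(y).
\end{align}
For $x\in F$ and $y\in E$ one has $|x-y|\geq \dist(E,F)$. Since the function $r\mapsto (1+r/t)^{-(n+1)}$ is decreasing, the bound~\eqref{Eq: poisson kernel bound} gives the pointwise estimate
\begin{align}
  |K(t,x,y)| \lesssim (t\wedge D)^{-n}\left(1+\frac{\dist(E,F)}{t}\right)^{-(n+1)}\qquad (x\in F,\ y\in E).
\end{align}
Plugging this into the kernel representation, the factor $(t\wedge D)^{-n}(1+\dist(E,F)/t)^{-(n+1)}$ is independent of $y$ and may be pulled out of the integral. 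What remains is $\int_E |f(y)|\d\sigma(y) = \|\ind_E f\|_1$. Taking the supremum over $x\in F$ then yields
\begin{align}
  \|\ind_F\,\e^{-t\N}\ind_E f\|_\infty \lesssim (t\wedge D)^{-n}\left(1+\frac{\dist(E,F)}{t}\right)^{-(n+1)}\|\ind_E f\|_1,
\end{align}
which is exactly an $\L^1\to\L^\infty$ off-diagonal estimate of order $n+1$ in the sense of Definition~\ref{Def: ODE}.

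There is no real obstacle here; the statement is essentially a reformulation of Theorem~\ref{Thm: poisson kernel bounds} on disjoint sets. The only points worth noting are (i) the match between $(t\wedge D)^{\nicefrac{n}{\infty}-\nicefrac{n}{1}}$ and the prefactor $(t\wedge D)^{-n}$ in the kernel bound, and (ii) the monotonicity of the decay factor, which lets us replace $|x-y|$ by the worst-case lower bound $\dist(E,F)$. Everything else is a one-line application of Fubini/Minkowski.
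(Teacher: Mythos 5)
Your argument is correct and is essentially identical to the paper's: both read the off-diagonal estimate directly off the Poisson kernel bound from Theorem~\ref{Thm: poisson kernel bounds}, using $|x-y|\geq\dist(E,F)$ for $x\in F$, $y\in E$ and then taking the supremum over $x\in F$. The only cosmetic difference is that the paper phrases the final step as a density argument starting from $f\in\L^2(\Gamma)$, whereas you work directly with $f\in\L^1\cap\L^2$, which already matches the requirement in Definition~\ref{Def: ODE}.
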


\begin{proof}
	Let $f\in \L^2(\Gamma)$, a dense subclass of $\L^1(\Gamma)$, and let $E,F \subseteq \Gamma$ be measurable. According to Theorem~\ref{Thm: poisson kernel bounds}, write
	\begin{align}
		\e^{-t \N} \ind_E f(x) = \int_{\Gamma\cap E} K(t,x,y) f(y) \d \sigma(y) \qquad (x \in \Gamma).
	\end{align}
	For $x\in F$ and $y\in E$ we deduce from~\eqref{Eq: poisson kernel bound} the bound
	\begin{align}
		|K(t,x,y)| \lesssim (t \land D)^{-n} \left(1+\frac{\dist(E,F)}{t} \right)^{-n-1}.
	\end{align}
	Consequently,
	\begin{align}
		\| \ind_F \e^{-t \N} \ind_E f \|_\infty &\leq \sup_{x\in F} \int_{\Gamma \cap E} |K(t,x,y)| |f(y)| \d \sigma(y)\\
		& \lesssim (t\land D)^{-n} \left(1+\frac{\dist(E,F)}{t} \right)^{-n-1} \| \ind_E f\|_1.
	\end{align}
	The assertion then follows by density.
\end{proof}

\begin{theorem}[$\L^p\to \L^q$ off-diagonal estimates]
\label{Thm: scalar and smooth domain}
	Suppose that $\Omega \subseteq \R^{n+1}$ is a bounded domain with $\Cont^{1+\kappa}$-boundary $\Gamma$ for some $\kappa>0$, and suppose that $\cL$ is associated with a real symmetric equation with H\"{o}lder continuous coefficients. Let $1< p \leq q < \infty$, then $\{ \e^{-t\N} \}_{t >0}$ satisfies $\L^p\to \L^q$ off-diagonal estimates of order $1+\nicefrac{n}{p}-\nicefrac{n}{q}$.
\end{theorem}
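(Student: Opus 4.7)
The plan is to derive the result by Riesz--Thorin interpolation between two endpoint off-diagonal estimates, both extracted from the Poisson kernel bound of Theorem~\ref{Thm: poisson kernel bounds}. Write $d = \dist(E,F)$ and $D = \diam(\Gamma)$. The first endpoint, $\L^1 \to \L^\infty$ of order $n+1$, is already in hand via Corollary~\ref{Cor: L1 to Linfty}.

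The second endpoint I aim for is an $\L^p \to \L^p$ off-diagonal estimate of order $1$, valid for every $p \in [1,\infty]$, to be obtained by Schur's test on $\ind_F \e^{-t\N} \ind_E$. For $x \in F$, using $|x-y| \geq d$ for $y \in E$ and passing to polar-type coordinates adapted to the $n$-dimensional Lipschitz boundary $\Gamma$, I will bound
\[
 \int_E |K(t,x,y)| \, d\sigma(y) \lesssim (t \wedge D)^{-n} \int_d^{\infty} \bigl(1 + \tfrac{r}{t}\bigr)^{-(n+1)} r^{n-1} \, dr,
\]
which after the substitution $u = r/t$ (together with the trivial estimate $|K|\lesssim D^{-n}$ when $t \geq D$) collapses to $\lesssim (1 + d/t)^{-1}$, uniformly in $x$. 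The symmetric bound on $\sup_{y \in E} \int_F |K| \, d\sigma(x)$ is identical, and Schur's lemma then yields the desired $\L^p \to \L^p$ estimate with order $1$, independently of $p \in [1,\infty]$.

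For the interpolation, given $1 < p \leq q < \infty$, set $\theta := 1 - \nicefrac{1}{p} + \nicefrac{1}{q} \in (0, 1]$ and $p_1 := \theta q = q(p-1)/p + 1 \in (1, \infty)$, so that $(p, q)$ lies on the segment from $(1, \infty)$ to $(p_1, p_1)$ in the $(\nicefrac{1}{p}, \nicefrac{1}{q})$-diagram. Riesz--Thorin complex interpolation applied to $\ind_F \e^{-t\N} \ind_E$ yields
\[
 \|\ind_F \e^{-t\N} \ind_E\|_{\L^p \to \L^q} \lesssim \bigl[(t\wedge D)^{-n}(1+\nicefrac{d}{t})^{-(n+1)}\bigr]^{1-\theta} \bigl[(1+\nicefrac{d}{t})^{-1}\bigr]^{\theta},
\]
and via $1-\theta = \nicefrac{1}{p}-\nicefrac{1}{q}$ the exponents simplify cleanly: the power of $(t\wedge D)$ becomes $-n(1-\theta) = \nicefrac{n}{q}-\nicefrac{n}{p}$, and the power of $(1+\nicefrac{d}{t})$ becomes $-(n+1)(1-\theta)-\theta = -(1+n(1-\theta)) = -(1+\nicefrac{n}{p}-\nicefrac{n}{q})$, precisely matching the claim.

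The only nontrivial part of the argument is the Schur computation, where one must separate the regimes $t \lessgtr D$ and $d \lessgtr t$ to verify that the correct $(1+d/t)^{-1}$ decay emerges in each case; everything else is bookkeeping driven by the pointwise Poisson bound.
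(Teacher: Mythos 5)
Your proposal is correct, and it reaches the conclusion by a genuinely different route for the on-diagonal endpoint. The paper's Step~1 obtains the $\L^s$ off-diagonal estimate of order $1$ from Theorem~\ref{Thm: ODE from commutator bounds} applied with $p=q=r=s$; this requires the $\L^r$ commutator bound from~\cite[Thm.~7.3]{PoissonBounds} (Example~\ref{Ex: Lp commutator}), which is precisely what forces $s\in(1,\infty)$ and hence the restriction $1<p\leq q<\infty$ in the statement. You instead extract the on-diagonal estimate directly from the Poisson kernel bound of Theorem~\ref{Thm: poisson kernel bounds} via Schur's test: bounding $\sup_{x\in F}\int_E|K(t,x,y)|\,\mathrm{d}\sigma(y)\lesssim (1+\dist(E,F)/t)^{-1}$, and the same for the transpose by symmetry of the bound, gives $\L^p$ off-diagonal estimates of order $1$ \emph{for every} $p\in[1,\infty]$. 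This is strictly stronger than the paper's endpoint and bypasses the commutator machinery entirely, making the proof self-contained modulo Theorem~\ref{Thm: poisson kernel bounds}; in fact it yields the theorem for the full range $1\leq p\leq q\leq\infty$. The Riesz--Thorin step with $\theta=1-\nicefrac{1}{p}+\nicefrac{1}{q}$, $p_1=\theta q$ is then identical to the paper's Step~2, and your exponent arithmetic checks out. The only spot worth tightening is the ``polar coordinates'' reduction $\int_E\to\int_d^\infty(\cdots)r^{n-1}\,\mathrm{d}r$: on a rough boundary this should really be a dyadic annulus decomposition combined with the $n$-regularity of $\sigma$ (Example~\ref{Ex: Gamma is 1-regular}), but that is routine and gives the same $(1+d/t)^{-1}$ after summing the geometric series.
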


\begin{proof}
	The proof divides into two steps.

	\textbf{Step 1}: $\L^s$ off-diagonal estimate of order $1$ for all $s\in (1,\infty)$.

	We appeal to Theorem~\ref{Thm: ODE from commutator bounds}, this time with $p=q=r=s$, where $s$ is any number in $(1,\infty)$. Assumption~\ref{Ass: Hyper-pqr} is verified due to Example~\ref{Ex: scalar gives hyper} and is even true in Lipschitz domains. Assumption~\ref{Ass: C-r} relies on the smoothness of $\Gamma$ and was shown in~\cite[Thm.~7.3]{PoissonBounds}.

	\textbf{Step 2}: Interpolation with $\L^1 \to \L^\infty$ off-diagonal estimates.

	Let $1< p \leq q < \infty$.  We aim to interpolate the $\L^1\to \L^\infty$ off-diagonal estimates from Corollary~\ref{Cor: L1 to Linfty} with the $\L^s$ off-diagonal estimates from Step~1 for a suitable choice of $s\in (1,\infty)$. To this end, consider the identities
	\begin{align}
		 \quad \frac{1}{p} = \frac{1-\theta}{1} + \frac{\theta}{s} \qquad \&  \qquad   \frac{1}{q} = \frac{\theta}{s}.
	\end{align}
	Solving for $s\in (1,\infty)$ and $\theta\in [0,1]$ leads to
	\begin{align}
		s = 1+q-\frac{q}{p} \qquad \& \qquad \theta = 1+\frac{1}{q}-\frac{1}{p}.
	\end{align}
	Observe that $s=1$ or $s=\infty$ are excluded due to $p>1$ and $q<\infty$.

	Now, fix $E,F \subseteq \Gamma$ measurable and $t > 0$. According to Corollary~\ref{Cor: L1 to Linfty} and Step~1, the operator $\ind_F \e^{-t \N} \ind_E$ is $\L^1\to \L^\infty$ and
	$\L^s$-bounded with operator norms controlled by $(t \land D)^{-n} (1+\nicefrac{\dist(E,F)}{t})^{-n-1}$ and $ (1+\nicefrac{\dist(E,F)}{t})^{-1}$, respectively. Consequently, observing that $(1-\theta)(n+1)+\theta = 1+\nicefrac{n}{p}-\nicefrac{n}{q}$, and that $(1- \theta)n = \nicefrac{n}{p}-\nicefrac{n}{q}$, Riesz--Thorin interpolation gives
	\begin{align}
		\| \ind_F \e^{-t \N} \ind_E f \|_q \lesssim (t \land D)^{\nicefrac{n}{q}-\nicefrac{n}{p}} \left(1+\frac{\dist(E,F)}{t} \right)^{-(1+\nicefrac{n}{p}-\nicefrac{n}{q})} \| \ind_E f\|_p. & \qedhere
	\end{align}
\end{proof}

\subsection*{Including terms of order zero}
\label{Subsec: zero order terms}

In this subsection, we discuss  briefly the Dirichlet--to--Neumann operator with  a positive potential. Let  $\cL$ be an  elliptic operator associated with a scalar equation on a bounded Lipschitz domain $\Omega$ and let $ 0 \le V \in L^{\infty}(\Omega)$ be a non trivial potential. The operator $\cL + V$ is given by the form $\fra_V(u,v) = \fra(u,v) + (Vu \SP v)_2$, where the form $\fra$ is defined as in Section~\ref{Subsec: Elliptic systems}. Definitions~\ref{Def: harmonic} and~\ref{Def: Dir to Neu} are as before upon replacing $\fra$ by $\fra_V$. We denote by $N_{V}$ the corresponding Dirichlet--to--Neumann operator. Moreover, Lemmas~\ref{Lem: W12 decomposition}, ~\ref{Lem: ellipticity with trace} and Example ~\ref{Ex: scalar gives hyper} stay valid.

Suppose that $\cL$ is associated with a  symmetric form,  then  $N_{V}$ is symmetric on $\L^{2}(\Gamma)$. Denote by $\lambda_{1}$ the smallest  eigenvalue of $N_{V}$. Then $\lambda_{1} > 0$ as soon as $V$ is non trivial on $\Omega$. To see this, assume for a contradiction that $\lambda_{1} = 0$. Then there exists a non trivial $f \in D(N_{V})$ such that $N_{V}f = 0$. Taking the scalar product with $f$ and using the definition of the form defining  $N_{V}$ (as in Definition~\ref{Def: Dir to Neu}), it follows that
\begin{align}
		\fra(E(f), E(f)) + \int_{\Omega} V | E(f) |^{2 } \d x = 0.
	\end{align}
From this and the ellipticity condition we obtain that the function $E(f)$ is  constant and $\int_{\Omega} V | E(f) |^{2 } \d x= 0$. Thus, $\int_{\Omega} V \d x = 0$ since
$E(f)$ is a non-zero constant. The latter equality implies that $V$ must be  trivial.

Suppose now that $\Omega$ is a $\Cont^{1+\kappa}$-domain for some $\kappa > 0$ and that  $\cL$ is associated with a real symmetric equation with H\"{o}lder continuous coefficients. Then the kernel $K_{V}(t,x,y)$ of $\e^{-t N_{V}}$ satisfies the  Poisson bound
\begin{align}
	\label{Eq: poisson kernel bound-V}
		|K_{V}(t,x,y)| \lesssim (t \land D)^{-n} e^{-t \lambda_{1}}\left(1+\frac{|x-y|}{t} \right)^{-n-1} \qquad (x,y \in \Gamma, \ t > 0),
	\end{align}
see again ~\cite[Thm.~1.1]{PoissonBounds}. Comparing to ~\eqref{Eq: poisson kernel bound}, the gain here is the additional exponential decay when $t \to \infty$.

Next, the commutator $[N_{V}, g ]$ is bounded on $\L^r(\Gamma)$ for all $r \in (1, \infty)$ by ~\cite[Thm.~7.3]{PoissonBounds}. Thus, we can apply the proof of Theorem~\ref{Thm: ODE from commutator bounds} to obtain $\L^s$ off-diagonal bounds for $\e^{-t N_{V}}$ for all $s \in (1,\infty)$ and all $t > 0$. As for Theorem~\ref{Thm: scalar and smooth domain}, we interpolate between $\L^1\to \L^{\infty}$ and $\L^{s}$ off-diagonal bounds to derive for all $t > 0$ and $f \in \L^{2}(\Gamma)\cap \L^{p}(\Gamma)$,
\begin{align}
		\| \ind_F \e^{-t N_{V}} \ind_E f \|_q \lesssim (t \land  D)^{\nicefrac{n}{q}-\nicefrac{n}{p}}
		\e^{-\lambda_{1}(\nicefrac{1}{p}-\nicefrac{1}{q})}\left(1+\frac{\dist(E,F)}{t} \right)^{-(1+\nicefrac{n}{p}-\nicefrac{n}{q})} \| \ind_E f\|_p. & \qedhere
	\end{align}
In particular, the family $\bigl\{ \e^{-t N_{V}} \bigr\}_{t > 0}$ satisfies $\L^{p} \to \L^{q}$ off-diagonal estimates of order $1+\nicefrac{n}{p}-\nicefrac{n}{q}$.

\section{Two-dimensional systems}
\label{Sec: d=2}

As a slight abuse of notation, we use the symbol $n$ in Definition~\ref{Def: l-regular} and Proposition~\ref{Prop: extrapolation abstract} to denote an arbitrary dimension for metric measure spaces. However, in the scope of this article, we will only apply Proposition~\ref{Prop: extrapolation abstract} in the case when $n$ is the boundary dimension of $\Omega$.

The aim of this section is to prove extrapolation results for a family of operators satisfying off-diagonal bounds with limited order. In \cite{KW},  such results are proved in the setting of spaces of homogeneous type, however the  operators are assumed to have exponential off-diagonal decay. We follow their arguments,
but we work with operators having only polynomial decay. To do so, we  have to consider spaces of fixed dimension in the sense of Definition \ref{Def: l-regular}.

\begin{definition}
\label{Def: l-regular}
	Let $X$ be a metric space, $\mu$ a Borel measure on $X$, and let $n>0$. Call $(X,\mu)$ an $n$-regular space, if one has
	\begin{align}
	\label{Eq: l-regular}
		\forall x\in X, \; \forall r>0 \colon \quad \mu(\B(x,r)) \approx (r \wedge \diam(X))^n.
	\end{align}
\end{definition}

\begin{example}
\label{Ex: Gamma is 1-regular}
	The boundary $\Gamma$ of a bounded Lipschitz domain $\Omega \subseteq \R^d = \R^{n+1}$ is $n$-regular. To see this, cover $\Gamma$ by finitely many neighborhoods in which, up to a rotation, $\Gamma$ is given as a Lipschitz graph. Let $r_0$ be a Lebesgue number for this covering. Then the surface measure of a ball $\B(x,r)$ as in~\eqref{Eq: l-regular} with $r\leq r_0$ can be computed using one such chart, and the implicit constants are determined by the Lipschitz constant of the respective chart. Finally,~\eqref{Eq: l-regular} can be extended to radii $r\leq \diam(X)$ by standard covering arguments.
\end{example}

For $1\leq s,q < \infty$, $x\in X$, $t>0$, and $f$ locally in $\L^s$ or $\L^q$, define the \emph{$q$-average} $\Avg{q}{t} f(x) \coloneqq |\B(x,t)|^{-\nicefrac{1}{q}} \| \ind_{\B(x,t)} f \|_q$ and the associated \emph{$s$-maximal function} $\MO_s f(x) \coloneqq \sup_{t>0} \Avg{s}{t} f(x)$. Note that $\MO_s f(x) = \left(\MO (|f|^s)(x) \right)^{\nicefrac{1}{s}}$, where $\MO=\MO_1$ is the usual centered Hardy--Littlewood maximal operator.

\begin{lemma}
\label{Lemma: average of operator family}
	Let $(X, \mu)$ be an $n$-regular space, let $1\leq s \leq q$, and let $\{ T(z) \}_{z\in U}$ be a family of bounded operators on $\L^2(X)$ that satisfies $\L^s \to \L^q$ off-diagonal estimates of order $\gamma>\nicefrac{n}{s}$, where $U \subseteq \C \setminus \{ 0 \}$ is some index set.
	Then
	\begin{align}
		\Avg{q}{|z|} (T(z) f) \lesssim \MO_s f \qquad (z\in U),
	\end{align}
	where the implicit constant depends on $n$, $s$, $\gamma$, and the implied constant from $\L^s \to \L^q$ off-diagonal estimates.
\end{lemma}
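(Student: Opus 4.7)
The strategy is the standard annular decomposition around $x$ at scale $|z|$. Fix $x\in X$ and $z\in U$, and abbreviate $D = \diam(X)$, $B_k = \B(x, 2^k |z|)$. Decompose $f = \sum_{k\ge 0} f_k$, where $f_0 = \ind_{B_1} f$ and $f_k = \ind_{B_{k+1} \setminus B_k} f$ for $k\ge 1$. For $k\ge 1$ the support $E_k$ of $f_k$ satisfies $\dist(E_k, \B(x, |z|)) \ge (2^k-1)|z| \ge 2^{k-1} |z|$, so the off-diagonal factor contributes $(1 + \dist(E_k, \B(x, |z|))/|z|)^{-\gamma} \lesssim 2^{-k\gamma}$; for $k=0$ it is just bounded by $1$.

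Apply the $\L^s \to \L^q$ off-diagonal bound with $E = E_k$ and $F = \B(x, |z|)$ to get
\begin{align}
	\| \ind_{\B(x,|z|)} T(z) f_k \|_q \lesssim (|z| \wedge D)^{\nicefrac{n}{q}-\nicefrac{n}{s}} \, 2^{-k\gamma} \, \| f_k \|_s,
\end{align}
with the convention $2^{-0\cdot\gamma}=1$. By $n$-regularity of $X$, $|B_{k+1}| \lesssim (2^{k+1}|z| \wedge D)^n \lesssim 2^{kn}(|z| \wedge D)^n$, so
\begin{align}
	\| f_k \|_s \le |B_{k+1}|^{\nicefrac{1}{s}} \Avg{s}{2^{k+1}|z|} f(x) \lesssim 2^{\nicefrac{kn}{s}} (|z| \wedge D)^{\nicefrac{n}{s}} \, \MO_s f(x).
\end{align}

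Dividing by $|\B(x,|z|)|^{\nicefrac{1}{q}} \approx (|z| \wedge D)^{\nicefrac{n}{q}}$ (another use of $n$-regularity), the factors $(|z| \wedge D)^{\pm \nicefrac{n}{q}}$ cancel and the factors $(|z| \wedge D)^{\pm \nicefrac{n}{s}}$ also cancel, leaving
\begin{align}
	\Avg{q}{|z|} (T(z) f_k)(x) \lesssim 2^{-k(\gamma - \nicefrac{n}{s})} \, \MO_s f(x).
\end{align}
Summing the geometric series over $k\ge 0$ converges precisely because of the assumption $\gamma > \nicefrac{n}{s}$, yielding the desired pointwise bound. The only place where care is required is in the ball-volume comparison when $|z|$ is comparable to or larger than $D$: there one uses that $(2^k |z| \wedge D) \le 2^k (|z| \wedge D)$, which makes the bookkeeping work uniformly in $z$, and this is really the main (though minor) technical point of the argument.
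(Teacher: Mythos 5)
Your proof is correct and follows essentially the same annular-decomposition argument as the paper's: expand $f$ over dyadic annuli around $x$ at scale $|z|$, apply the $\L^s\to\L^q$ off-diagonal bound on each annulus, invoke $n$-regularity to convert $\L^s$-norms into $s$-averages, and sum the resulting geometric series using $\gamma>\nicefrac{n}{s}$. The only differences from the paper's write-up are cosmetic (indexing of the annuli and the point at which the finite-diameter cutoff is handled).
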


\begin{proof}
	Fix $z\in U$ and $x\in X$, and put $B=\B(x,|z|)$. For brevity, put $D=\diam(X)$ and $r=|z|$. If $D = \infty$, put $k_0 = \infty$, otherwise let $k_0$ denote the largest integer such that $2^{k_0} r \leq D$. Given $2\leq j \leq k_0$, define the annuli
	\begin{align}
		C_1(B) = 4 B \qquad \text{and} \qquad C_j(B) = 2^{j+1} B \setminus 2^j B.
	\end{align}
	The family $\{ C_j(B) \}_{j=1}^{k_0 \vee 1}$ is a decomposition of $X$, and satisfies $\dist(B, C_j(B)) \gtrsim 2^j r$ for all $j\geq 2$.

	Now, using $\L^s\to \L^q$ off-diagonal estimates and the decomposition into annuli, estimate
	\begin{align}
		\Avg{q}{r} (T(z) f)(x) &\approx (r \wedge D)^{-\nicefrac{n}{q}} \| \ind_{B} T(z) f \|_q \\
		&\leq (r \wedge D)^{-\nicefrac{n}{q}} \Bigl[ \| \ind_{B} T(z) \ind_{C_1(B)} f \|_q + \sum_{j=2}^{k_0} \| \ind_{B} T(z) \ind_{C_j(B)} f \|_q \Bigr] \\
		&\lesssim (r \wedge D)^{-\nicefrac{n}{s}} \Bigl[ \| \ind_{C_1(B)} f \|_s + \sum_{j=2}^{k_0} (1+2^j)^{-\gamma}  \| \ind_{C_j(B)} f \|_s \Bigr].
	\end{align}
	The first term in the last inequality can be controlled by
	\begin{align}
		(r \wedge D)^{-\nicefrac{n}{s}} (4r \wedge D)^{\nicefrac{n}{s}} \left( \fint_{4 B} |f|^s \right)^{\nicefrac{1}{s}} \lesssim \MO_s f(x).
	\end{align}
	For the second term, note that the sum is only non-empty if $r\leq D$, so we can bound it (up to a constant) by
	\begin{align}
		r^{-\nicefrac{n}{s}}  \sum_{j=2}^{k_0} (1+2^j)^{-\gamma} (2^{j+1} r \wedge D)^{\nicefrac{n}{s}} \left( \fint_{2^{j+1} B} |f|^s \right)^{\nicefrac{1}{s}} \lesssim \MO_s f(x) \sum_{j=2}^{k_0} 2^{-j(\gamma - \nicefrac{n}{s})}.
	\end{align}
	The sum is finite by assumption on $\gamma$, so we are left with
	\begin{align}
		\Avg{q}{|z|} (T(z) f)(x) \lesssim \MO_s f(x).
	\end{align}

	Dependence of implicit constants is readily verified, which completes the proof.
\end{proof}

\begin{definition}
	Let $1\leq p < \infty$. A family $\{ T(z) \}_{z\in U}$ of sublinear operators on $\L^2(X)$ satisfies \emph{square function estimates on $\L^p$}, if for all $z_1, \dots, z_k \in U$ and all $f_1, \dots, f_k \in \L^p(X) \cap \L^2(X)$ one has
	\begin{align}
	\label{Eq: Def SFE bounded}
		\Bigl\| \Bigl(\sum_j |T(z_j) f_j|^2 \Bigr)^{\nicefrac{1}{2}} \Bigr\|_p \lesssim \Bigl\| \Bigl(\sum_j |f_j|^2 \Bigr)^{\nicefrac{1}{2}} \Bigr\|_p.
	\end{align}
\end{definition}

\begin{remark}
	Specializing $k=1$ reveals $\L^p$-boundedness of the family $\{ T(z) \}_{z\in U}$.
	If we write $S(z)$ for the continuous extension of $T(z)$ on $\L^p(X) \cap \L^2(X)$ to $\L^p(X)$, then by density, the family $\{ S(z) \}_{z\in U}$ satisfies~\eqref{Eq: Def SFE bounded} for all $f_1, \dots, f_k \in \L^p(X)$. The latter property is equivalent to $R$-boundedness of the family $\{ S(z) \}_{z\in U}$ of bounded operators on $\L^p(X)$, see for instance~\cite[Remark~2.2]{W}.
\end{remark}

We complement Lemma~\ref{Lemma: average of operator family} by the following square function estimates for $p$-maximal functions and $q$-averages. The estimate for $\MO_1$ on the Euclidean space is the classical Fefferman--Stein inequality~\cite{FS}. For a version on spaces of homogeneous type, see for instance~\cite[Thm.~2]{Grafakos-MO}. The extension from $p=1$ to $p\geq 1$ and estimates for $q$-averages are explained in~\cite[Prop.~8.13]{KW}.

\begin{lemma}
\label{Lem: MO and Avg R2 bounded}
	Let $1\leq s < 2 < q < \infty$ and let $p\in (s, q)$. Then the singleton $\{ \MO_s \}$ satisfies square function estimates on $\L^p$, and the family $\{ \Avg{q}{t} \}_{t>0}$ satisfies the following reverse inequality: For all $t_1,\dots,t_k >0$ and measurable functions $f_1,\dots,f_k$ on $X$ one has the inequality
	\begin{align}
	\label{Eq: lower bound SF avg}
		\Bigl\| \Bigl(\sum_j |f_j|^2 \Bigr)^{\nicefrac{1}{2}} \Bigr\|_p \lesssim \Bigl\| \Bigl(\sum_j |\Avg{q}{t_j} f_j|^2 \Bigr)^{\nicefrac{1}{2}} \Bigr\|_p.
	\end{align}
\end{lemma}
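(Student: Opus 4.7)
The plan is to prove the two statements separately, with the reverse inequality for $\Avg{q}{t_j}$ following from the square function estimate for $\MO_s$ by duality. For the first assertion, I would exploit the pointwise identity $\MO_s f = (\MO|f|^s)^{1/s}$ together with the rescaling $\|G^{1/s}\|_p = \|G\|_{p/s}^{1/s}$ to reduce the claim, after the substitution $g_j = |f_j|^s$, to the vector-valued Fefferman--Stein inequality
\begin{align}
\Bigl\| \Bigl(\sum_j (\MO g_j)^{2/s}\Bigr)^{s/2}\Bigr\|_{p/s} \lesssim \Bigl\|\Bigl(\sum_j g_j^{2/s}\Bigr)^{s/2}\Bigr\|_{p/s}.
\end{align}
The exponents are admissible since $p/s > 1$ (because $p > s$) and $R \coloneqq 2/s > 1$ (because $s < 2$); an $n$-regular space is in particular of homogeneous type, so this inequality holds by the Fefferman--Stein theorem in that generality, see~\cite{Grafakos-MO}.

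For the second assertion I plan to use $\L^p(\ell^2)$-$\L^{p'}(\ell^2)$ duality to write $\|(\sum_j |f_j|^2)^{1/2}\|_p$ as the supremum of $|\sum_j \int f_j \overline{g_j}\d\mu|$ over test sequences with $\|(\sum_j |g_j|^2)^{1/2}\|_{p'} \le 1$. The key pointwise ingredient is Hölder's inequality on each ball $\B(x,t_j)$, namely
\begin{align}
\fint_{\B(x,t_j)} |f_j g_j| \d\mu \le \Avg{q}{t_j} f_j(x) \cdot \Avg{q'}{t_j} g_j(x).
\end{align}
On an $n$-regular space one further has the Fubini-type equivalence $\int h \d\mu \approx \int \fint_{\B(\argdot, t_j)} h \d \mu \d\mu$ for nonnegative $h$, which upgrades this pointwise bound to $|\int f_j \overline{g_j}\d\mu| \lesssim \int \Avg{q}{t_j} f_j \cdot \Avg{q'}{t_j} g_j \d\mu$. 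Summing in $j$ via Cauchy--Schwarz in $\ell^2$ and then Hölder with exponents $(p,p')$ in $x$ yields
\begin{align}
\Bigl| \sum_j \int f_j \overline{g_j} \d\mu \Bigr| \lesssim \Bigl\|\Bigl(\sum_j |\Avg{q}{t_j} f_j|^2\Bigr)^{1/2}\Bigr\|_p \cdot \Bigl\|\Bigl(\sum_j |\Avg{q'}{t_j} g_j|^2\Bigr)^{1/2}\Bigr\|_{p'}.
\end{align}
Since $q > 2$ and $p < q$, one has $q' < 2$ and $p' > q'$, so bounding $\Avg{q'}{t_j} g_j \le \MO_{q'} g_j$ pointwise and applying Part~1 with parameter $q'$ in place of $s$ and exponent $p'$ in place of $p$ absorbs the second factor into $\|(\sum_j |g_j|^2)^{1/2}\|_{p'} \le 1$. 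Taking the supremum over the admissible $(g_j)$ gives the required reverse inequality.

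The delicate point is justifying the Fubini-type equivalence $\int h \d\mu \approx \int \fint_{\B(\argdot,t)} h \d\mu \d\mu$ uniformly in $t > 0$. By the symmetry of the metric it reduces to the bound $\int_{\B(y,t)} \mu(\B(x,t))^{-1} \d\mu(x) \approx 1$, uniform in $y$ and $t$, which in turn uses crucially the two-sided $n$-regularity estimate $\mu(\B(x,t)) \approx (t \wedge \diam(X))^n$ from Definition~\ref{Def: l-regular}, rather than only the doubling property of a general space of homogeneous type; this is precisely the reason the statement is formulated for $n$-regular spaces.
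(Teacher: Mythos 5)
Your argument is correct and fills in, in the standard way, a proof that the paper itself does not spell out: in the text the lemma is dispatched by citing Fefferman--Stein~\cite{FS}, its homogeneous-space version~\cite{Grafakos-MO}, and~\cite[Prop.~8.13]{KW} for the extension to $\MO_s$ and the reverse inequality for $q$-averages. The route you take (rescaling $\MO_s f = (\MO|f|^s)^{1/s}$ to reduce to vector-valued Fefferman--Stein with exponents $p/s>1$ and $\ell^{2/s}$, then deducing the $\Avg{q}{t}$ lower bound by $\L^p(\ell^2)$--$\L^{p'}(\ell^2)$ duality, ball-wise H\"older, the Fubini-type bound, and an application of Part~1 with $(q',p')$) is precisely the mechanism underlying the reference in~\cite{KW}, and your exponent bookkeeping checks out: $q'<2$, $p'>q'$, $p'<\infty$ are all guaranteed by $1\le s<p<q<\infty$ and $q>2$.

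One inaccuracy worth correcting: your closing remark attributes the uniformity of $\int_{\B(y,t)} \mu(\B(x,t))^{-1}\,\d\mu(x) \approx 1$ to the two-sided $n$-regularity estimate of Definition~\ref{Def: l-regular}, and claims this is ``precisely the reason'' the lemma is stated on $n$-regular spaces. In fact the doubling property alone suffices here: if $x\in\B(y,t)$ then $\B(y,t)\subseteq\B(x,2t)$ and $\B(x,t)\subseteq\B(y,2t)$, so doubling gives $\mu(\B(x,t))\approx\mu(\B(y,t))$, whence the integral is $\approx\mu(\B(y,t))^{-1}\mu(\B(y,t))=1$. Accordingly, Lemma~\ref{Lem: MO and Avg R2 bounded} is valid on an arbitrary space of homogeneous type (as the references~\cite{Grafakos-MO, KW} indicate); the two-sided volume bound~\eqref{Eq: l-regular} is used elsewhere, namely in the annular decomposition and geometric summation of Lemma~\ref{Lemma: average of operator family}, not in this lemma. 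The misattribution does not invalidate your argument, since $n$-regular spaces are doubling, but it mischaracterizes which hypothesis is doing the work.
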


\begin{remark}
	In~\cite{KW}, the lemma is only stated for $f=(f_j)_{j=1}^k \in \L^p(X)^k$. Otherwise, fix $x_0 \in X$ and define the set $A_{m} = \bigl\{ x\in \B(x_0,m) \colon | f(x) |_{\C^k} \leq m \bigr\}$, and apply~\eqref{Eq: lower bound SF avg} to the functions $\ind_{A_m} f$. Since $\Avg{q}{t} f \geq \Avg{q}{t} g$ if $f \geq g$, the claim follows if we take the limit $m\to \infty$.
\end{remark}

Combining the foregoing lemma with Lemma~\ref{Lemma: average of operator family} leads to the following central extrapolation result.

\begin{proposition}
\label{Prop: extrapolation abstract}
	Let $(X, \mu)$ be an $n$-regular space, let $1 \leq s < 2 < q < \infty$, $p\in (s,q)$, and let $\{ T(z) \}_{z\in U}$ be a family of bounded operators on $\L^2(X)$ that satisfies $\L^s \to \L^q$ off-diagonal estimates of order $\gamma>\nicefrac{n}{s}$, where $U \subseteq \C \setminus \{ 0 \}$ is some index set. Then $\{ T(z) \}_{z\in U}$ satisfies square function estimates on $\L^p$.
\end{proposition}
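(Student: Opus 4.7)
The plan is to sandwich the square function of $\{T(z_j)f_j\}_j$ between two inequalities that reduce matters, via $q$-averages, to the maximal operator $\MO_s$, and then invoke the Fefferman--Stein-type inequality for $\MO_s$. Concretely, fix $z_1,\dots,z_k \in U$ and $f_1,\dots,f_k \in \L^p(X)\cap \L^2(X)$.

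First I would apply the reverse square function inequality of Lemma~\ref{Lem: MO and Avg R2 bounded}, which is available since $p \in (s,q)$ and $s < 2 < q$, to the functions $T(z_j)f_j$ with $t_j = |z_j|$. This yields
\begin{align}
\Bigl\| \Bigl(\sum_j |T(z_j) f_j|^2 \Bigr)^{\nicefrac{1}{2}} \Bigr\|_p
\lesssim
\Bigl\| \Bigl(\sum_j |\Avg{q}{|z_j|}(T(z_j) f_j)|^2 \Bigr)^{\nicefrac{1}{2}} \Bigr\|_p.
\end{align}
(A minor technical point: Lemma~\ref{Lem: MO and Avg R2 bounded} is stated in \cite{KW} for $\L^p$-tuples, but the remark following it extends the estimate to arbitrary measurable tuples via truncation on balls and monotone convergence of $\Avg{q}{t}$, which is exactly what we need to accommodate $T(z_j)f_j \in \L^2 \cap \L^q$.)

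Next I would invoke Lemma~\ref{Lemma: average of operator family}, whose hypotheses are satisfied because the family $\{T(z)\}_{z\in U}$ satisfies $\L^s \to \L^q$ off-diagonal estimates of order $\gamma > \nicefrac{n}{s}$ on the $n$-regular space $X$. This gives the pointwise bound
\begin{align}
\Avg{q}{|z_j|}(T(z_j) f_j)(x) \lesssim \MO_s f_j(x)
\end{align}
with an implicit constant independent of $j$. Plugging this into the right-hand side above and monotonicity of the $\ell^2$-norm yields
\begin{align}
\Bigl\| \Bigl(\sum_j |\Avg{q}{|z_j|}(T(z_j) f_j)|^2 \Bigr)^{\nicefrac{1}{2}} \Bigr\|_p
\lesssim
\Bigl\| \Bigl(\sum_j |\MO_s f_j|^2 \Bigr)^{\nicefrac{1}{2}} \Bigr\|_p.
\end{align}

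Finally, since $p > s$, the singleton $\{\MO_s\}$ satisfies square function estimates on $\L^p$ by the first part of Lemma~\ref{Lem: MO and Avg R2 bounded}, so the last expression is bounded by $\| (\sum_j |f_j|^2)^{\nicefrac{1}{2}} \|_p$, which concludes the proof. The only conceptually delicate point is checking that the reverse averaging inequality really does apply to the $T(z_j)f_j$ even though they need not lie in $\L^p$; this is handled by the truncation remark above, and once one has this in hand the whole argument is essentially a three-line chain of inequalities dictated by Lemmas~\ref{Lemma: average of operator family} and~\ref{Lem: MO and Avg R2 bounded}.
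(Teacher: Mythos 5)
Your proof is correct and follows essentially the same chain of inequalities as the paper: the reverse square function estimate for $q$-averages from Lemma~\ref{Lem: MO and Avg R2 bounded}, followed by the pointwise domination from Lemma~\ref{Lemma: average of operator family}, followed by the Fefferman--Stein square function estimate for $\MO_s$. You also correctly flag the one technical subtlety, namely that the reverse averaging inequality must be extended to general measurable tuples before it can be applied to $T(z_j)f_j$, which is exactly the point the paper emphasizes.
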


\begin{proof}
	For $z_1,\dots,z_k \in U$ and $f_1,\dots,f_k \in \L^p(\Gamma)\cap \L^2(\Gamma)$, estimate with Lemma~\ref{Lem: MO and Avg R2 bounded} applied twice (here, it is crucial that~\eqref{Eq: lower bound SF avg} is valid for all measurable functions), and Lemma~\ref{Lemma: average of operator family} that
	\begin{align}
		\Bigl\| \Bigl(\sum_j |T(z_j) f_j|^2 \Bigr)^{\nicefrac{1}{2}} \Bigr\|_p &\lesssim \Bigl\| \Bigl(\sum_j |\Avg{q}{|z_j|} T(z_j) f_j|^2 \Bigr)^{\nicefrac{1}{2}} \Bigr\|_p \\
		&\lesssim \Bigl\| \Bigl(\sum_j |\MO_s f_j|^2 \Bigr)^{\nicefrac{1}{2}} \Bigr\|_p \\
		&\lesssim \Bigl\| \Bigl(\sum_j |f_j|^2 \Bigr)^{\nicefrac{1}{2}} \Bigr\|_p. \qedhere
	\end{align}
\end{proof}

Now, we return to the study of the Dirichlet--to--Neumann operator, but in the particular case $n=1$.

\begin{theorem}
\label{Thm: D-to-N d=2 p-bounded}
	Suppose that $n=1$, and we are given a Dirichlet--to--Neumann operator $\N$ satisfying Assumption~\ref{Ass: C-r} with $r=2$. Let $p\in (1, \infty)$ and $\varphi \in (0,\nicefrac{\pi}{2}-\theta)$. Then $\{ \e^{-z \Ns} \}_{z\in \Sec_\varphi}$ extends from $\L^p(\Gamma) \cap \L^2(\Gamma)$ to a strongly continuous and analytic semigroup $\{ S(z) \}_{z\in \Sec_\varphi}$ on $\L^p(\Gamma)$. Moreover, the family $\{ S(z) \}_{z\in \Sec_\varphi}$ is $R$-bounded on $\L^p(\Gamma)$.
\end{theorem}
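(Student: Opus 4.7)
The plan is to combine the off-diagonal estimates of Theorem~\ref{Thm: ODE from commutator bounds}(ii) with the abstract extrapolation in Proposition~\ref{Prop: extrapolation abstract}, exploiting crucially that $n=1$ reduces the decay requirement $\gamma > n/s_0$ to $s_0 > 1$ when $\gamma=1$. First I would verify Assumption~\ref{Ass: Hyper-pqr} with $r=2$ on the sector: since $n=1$ gives $2n/(n-1) = \infty$, Proposition~\ref{Prop: hypercontractive} provides $\L^2 \to \L^{q_0}$ boundedness of $\{\e^{-z\Ns}\}_{z\in\Sec_\varphi}$ for every $q_0 < \infty$, and duality furnishes $\L^{s_0} \to \L^2$ boundedness for every $s_0 > 1$. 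Combined with the standing Assumption~\ref{Ass: C-r} with $r=2$, Theorem~\ref{Thm: ODE from commutator bounds}(ii) then supplies $\L^{s_0} \to \L^{q_0}$ off-diagonal estimates of order $1$ on $\Sec_\varphi$ for every $1 < s_0 \le 2 \le q_0 < \infty$.

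Next I would apply Proposition~\ref{Prop: extrapolation abstract}. Fix $p \in (1,\infty)$ and choose $s_0 \in (1, \min(p,2))$ close to $1$ and $q_0 \in (\max(p,2), \infty)$. Since $\Gamma$ is $1$-regular (Example~\ref{Ex: Gamma is 1-regular}) and $1 > 1/s_0 = n/s_0$, the proposition yields square function estimates on $\L^p(\Gamma)$ for $\{\e^{-z\Ns}\}_{z\in\Sec_\varphi}$. Because $\L^\infty(\Gamma) \subseteq \L^p(\Gamma) \cap \L^2(\Gamma)$ is dense in $\L^p(\Gamma)$ (as $\Gamma$ is compact), each $\e^{-z\Ns}$ admits a unique bounded extension $S(z)$ on $\L^p(\Gamma)$, and a density argument propagates the square function estimates to $\{S(z)\}_{z\in\Sec_\varphi}$, which is precisely $R$-boundedness.

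The semigroup identity on $\L^p$ follows from the $\L^2$ semigroup law together with density and uniform $\L^p$-boundedness. For analyticity, $z \mapsto \langle S(z)f, g\rangle$ is holomorphic for $f \in \L^p \cap \L^2$, $g \in \L^{p'} \cap \L^2$ by $\L^2$-analyticity; density combined with the uniform bound on $\|S(z)\|_{\L^p \to \L^p}$ on each closed subsector and the Weierstrass theorem extends this to all $f \in \L^p$, $g \in \L^{p'}$, so $z \mapsto S(z)f$ is weakly and hence strongly analytic.

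The remaining strong continuity at the origin — the most delicate step — I would handle by rerunning the previous steps with an auxiliary exponent $r > p$ (or $r \in (1, p)$ when $p < 2$) to secure uniform $\L^r$-boundedness of the family on $\Sec_\varphi$. For $f$ in the dense subspace $\L^r(\Gamma) \subseteq \L^p(\Gamma) \cap \L^2(\Gamma)$, the interpolation inequality $\|h\|_p \lesssim \|h\|_2^\theta \|h\|_r^{1-\theta}$ combined with $\L^2$-strong continuity of $\{\e^{-z\Ns}\}$ at $0$ and the uniform $\L^r$-bound forces $\|S(z)f - f\|_p \to 0$ as $z\to 0$ in $\Sec_{\varphi'}$ for every $\varphi' < \varphi$. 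Uniform $\L^p$-boundedness of $\{S(z)\}$ then promotes this to strong continuity on all of $\L^p(\Gamma)$. I expect this last step to be the main obstacle, because $R$-boundedness gives only uniform operator-norm control and no immediate pointwise convergence; the interpolation with an auxiliary exponent is what bridges that gap.
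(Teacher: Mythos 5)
Your proposal follows essentially the same route as the paper: establish $\L^{s_0}\to\L^{q_0}$ off-diagonal estimates of order $1$ on the sector from Theorem~\ref{Thm: ODE from commutator bounds}(ii) (the paper routes this through Corollary~\ref{Cor: system}, but the substance is identical), feed them into Proposition~\ref{Prop: extrapolation abstract} using that $n=1$ makes the threshold $\gamma>n/s_0$ trivial, extend by density, and then recover the semigroup law, analyticity via weak-to-strong, and strong continuity via interpolation with $\L^2$. One small slip: when $p<2$ you propose $r\in(1,p)$, but then $\L^r(\Gamma)\supseteq\L^p(\Gamma)$ on the finite measure space $\Gamma$, so $\L^r$ is not a subspace of $\L^p\cap\L^2$; the fix is to take $f$ in the dense subspace $\L^2(\Gamma)\subseteq\L^p(\Gamma)\cap\L^r(\Gamma)$ (or simply use the embedding $\L^2\hookrightarrow\L^p$ directly, which makes the case $p\le2$ immediate without interpolation).
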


\begin{proof}
	Let $p\in (1,\infty)$ and chose $1<s<2<q<\infty$ such that $p\in (s,q)$. According to Corollary~\ref{Cor: system}, the family $\{ \e^{-z \Ns} \}_{z\in \Sec_\varphi}$ satisfies $\L^s \to \L^q$ off-diagonal estimates of order $1$. Observe that $1>\nicefrac{1}{s} = \nicefrac{n}{s}$. Keeping Example~\ref{Ex: Gamma is 1-regular} in mind, we can invoke Proposition~\ref{Prop: extrapolation abstract} to deduce square function estimates on $\L^p(\Gamma)$ for $\{ \e^{-z \Ns} \}_{z\in \Sec_\varphi}$. On the one hand,~\eqref{Eq: Def SFE bounded} with $k=1$ reveals $\L^p$-boundedness of the family $\{ \e^{-z \Ns} \}_{z\in \Sec_\varphi}$. In particular, for $z\in \Sec_\varphi$ fixed, the operator $\e^{-z \Ns}$ can be extended from $\L^p(\Gamma) \cap \L^2(\Gamma)$ to an operator on $\L^p(\Gamma)$. Write $S(z)$ for this extension. So, $\{ S(z) \}_{z\in \Sec_\varphi}$ is a family of operators on $\L^p(\Gamma)$. On the other hand, we use  a density argument to obtain the square function estimate
	\begin{align}
	\label{Eq: Lp SFE}
		\quad\Bigl\| \Bigl(\sum_j |S(z_j) f_j|^2 \Bigr)^{\nicefrac{1}{2}} \Bigr\|_p \lesssim \Bigl\| \Bigl(\sum_j |f_j|^2 \Bigr)^{\nicefrac{1}{2}} \Bigr\|_p \qquad (z_1,\dots,z_k \in \Sec_\varphi, f_1,\dots,f_k \in \L^p(\Gamma)).
	\end{align}

	Moreover, we claim that the family $\{ S(z) \}_{z\in \Sec_\varphi}$ is a strongly continuous and analytic semigroup on $\L^p(\Gamma)$, which is $R$-bounded as a family of operators on $\L^p(\Gamma)$. The semigroup property is a consequence of the semigroup property of $\{ \e^{-z \Ns} \}_{z\in \Sec_\varphi}$ on $\L^2(\Gamma)$ and density. This argument works likewise for analyticity, using the characterization of analyticity by strong analyticity on a dense subspace, see~\cite[Prop.~A.3]{ABHN}. Strong continuity is a consequence of the standard $\L^p$-interpolation inequality.
\end{proof}

Owing to Example~\ref{Ex: Shen gives commutator}, a concrete instance of Theorem~\ref{Thm: D-to-N d=2 p-bounded} is the following.

\begin{corollary}
\label{Cor: Shen semigroup Lp bdd}
	Suppose that $n=1$, and we are given a Dirichlet--to--Neumann operator $\N$ associated with a real system on a Lipschitz domain whose coefficients are symmetric and H\"older continuous. Let $p\in (1,\infty)$ and $\varphi \in (0,\nicefrac{\pi}{2})$. Then $\{ \e^{-z \Ns} \}_{z\in \Sec_\varphi}$ extends from $\L^p(\Gamma) \cap \L^2(\Gamma)$ to a strongly continuous and analytic semigroup $\{ S(z) \}_{z\in \Sec_\varphi}$ on $\L^p(\Gamma)$ that is $R$-bounded on $\L^p(\Gamma)$.
\end{corollary}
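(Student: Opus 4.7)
The plan is to deduce this statement as a direct instance of Theorem~\ref{Thm: D-to-N d=2 p-bounded}. The only hypothesis in that theorem that needs to be verified from the assumptions of the corollary is Assumption~\ref{Ass: C-r} with $r=2$, since the restrictions $n=1$, $p \in (1,\infty)$, and $\varphi \in (0, \nicefrac{\pi}{2}-\theta)$ (we may shrink $\varphi$ without loss of generality, as the semigroup property allows us to extend analyticity to any strictly smaller sub-sector by the argument used in the proof of Theorem~\ref{Thm: ODE from commutator bounds}(ii)) match the framework of the theorem.

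To verify Assumption~\ref{Ass: C-r} with $r=2$, I would invoke Example~\ref{Ex: Shen gives commutator}: Shen's $\L^2$ commutator estimate from~\cite[Thm.~1.1]{Shen}, extended in~\cite[Thm.~7.2]{PoissonBounds} to $f \in \dom(\N)$, is exactly formulated for real symmetric elliptic systems with H\"older continuous coefficients on bounded Lipschitz domains. This is precisely the setting of the corollary, so the assumption is met.

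Consequently, the full conclusion of Theorem~\ref{Thm: D-to-N d=2 p-bounded} applies verbatim: the family $\{\e^{-z\Ns}\}_{z \in \Sec_\varphi}$ extends from $\L^p(\Gamma) \cap \L^2(\Gamma)$ to a strongly continuous analytic semigroup $\{S(z)\}_{z \in \Sec_\varphi}$ on $\L^p(\Gamma)$ that is $R$-bounded. There is no real obstacle here; the work was already done in building up to Theorem~\ref{Thm: D-to-N d=2 p-bounded}, and the present corollary just names a concrete class of Dirichlet--to--Neumann operators to which the abstract machinery applies.
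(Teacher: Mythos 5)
Your proposal follows exactly the paper's approach: verify Assumption~\ref{Ass: C-r} with $r=2$ via Example~\ref{Ex: Shen gives commutator} (Shen's $\L^2$ commutator estimate for real symmetric systems with H\"older continuous coefficients on Lipschitz domains), then invoke Theorem~\ref{Thm: D-to-N d=2 p-bounded}. The paper does not even spell out a separate proof; it simply notes that the corollary is a concrete instance of the theorem.

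One small remark concerns your aside about ``shrinking $\varphi$''. Theorem~\ref{Thm: D-to-N d=2 p-bounded} constrains $\varphi \in (0,\nicefrac{\pi}{2}-\theta)$, while the corollary allows $\varphi \in (0, \nicefrac{\pi}{2})$. Shrinking $\varphi$ would only give the conclusion on a \emph{smaller} sector, so this does not bridge the gap. What actually closes it is that the coefficients in the corollary are real and symmetric, hence the form $\fra$ is symmetric, $\N$ is self-adjoint on $\L^2(\Gamma)$, and the sectoriality angle $\theta$ is $0$; thus the two ranges of $\varphi$ coincide. This is the reason the corollary is stated on the full sector, and it is worth stating explicitly rather than appealing to a shrinking argument.
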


Write $\Ns_p$ for the generator of the semigroup $\{ S(t) \}_{t \geq 0}$ in the preceding corollary. Owing to Weis' seminal characterization of maximal regularity using $R$-boundedness~\cite[Thm.~4.2]{W}, we obtain  the following maximal regularity result.

\begin{corollary}
\label{Cor: MR d=2}
	Let $p,q\in (1,\infty)$. Then the operator $\Ns_p$ on $\L^p(\Gamma)$ satisfies $\L^q$-maximal regularity, that is to say, for $T>0$ and $f\in \L^q(0,T; \L^p(\Gamma))$, the problem
	\begin{align}
		\begin{dcases}
		\begin{aligned}
		\partial_t u(t) + \Ns_p u(t) &= f(t), \quad t\in (0,T) \\ u(0) &= 0
		\end{aligned}
		\end{dcases}
	\end{align}
	admits a unique solution $u$ which satisfies the estimate
	\begin{align}
		\| u \|_{\L^q(0,T; \L^p(\Gamma))} + \| \partial_t u \|_{\L^q(0,T; \L^p(\Gamma))} + \| \Ns_p u \|_{\L^q(0,T; \L^p(\Gamma))} \lesssim \| f \|_{\L^q(0,T; \L^p(\Gamma))}.
	\end{align}
\end{corollary}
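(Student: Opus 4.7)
The plan is to reduce the statement to Weis' celebrated characterization of maximal $\L^q$-regularity: on a UMD Banach space, a densely defined sectorial operator admits maximal $\L^q$-regularity on every finite interval $(0,T)$ and for every $q \in (1,\infty)$ as soon as the analytic semigroup it generates is $R$-bounded on some open sector $\Sec_\varphi$ with $\varphi > 0$. This is precisely the content of \cite[Thm.~4.2]{W}, and it is the natural reformulation in terms of semigroup $R$-boundedness of the equivalent $R$-sectoriality of the generator with some angle strictly less than $\nicefrac{\pi}{2}$.

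First I would verify that all structural hypotheses of Weis' theorem are in place. The underlying space $\L^p(\Gamma)$ is UMD for every $p \in (1,\infty)$, since $\Gamma$ is a $\sigma$-finite measure space. The operator $\Ns_p$ is by definition the generator of the strongly continuous semigroup $\{S(t)\}_{t \geq 0}$ constructed in Corollary~\ref{Cor: Shen semigroup Lp bdd}, hence is densely defined and closed. Moreover, that corollary provides exactly the quantitative input Weis' theorem requires: for each $\varphi \in (0, \nicefrac{\pi}{2})$ the extended family $\{S(z)\}_{z \in \Sec_\varphi}$ is $R$-bounded on $\L^p(\Gamma)$. In particular, $-\Ns_p$ generates a bounded analytic semigroup on $\L^p(\Gamma)$, and passing from $R$-boundedness of $\{S(z)\}_{z \in \Sec_\varphi}$ to $R$-sectoriality of $\Ns_p$ with angle strictly less than $\nicefrac{\pi}{2}$ is a standard consequence of the Laplace transform representation of the resolvent together with the permanence properties of $R$-bounds under averaging.

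With these ingredients at hand, applying Weis' theorem immediately yields the existence and uniqueness of a solution $u$ to the non-homogeneous Cauchy problem together with the displayed estimate controlling $\partial_t u$ and $\Ns_p u$ in $\L^q(0,T;\L^p(\Gamma))$. The bound on $\|u\|_{\L^q(0,T;\L^p(\Gamma))}$ itself is not part of Weis' statement, but follows from $u(0) = 0$ and Poincar\'{e}'s inequality on $(0,T)$, namely $\|u\|_{\L^q(0,T;\L^p(\Gamma))} \lesssim T \|\partial_t u\|_{\L^q(0,T;\L^p(\Gamma))}$, absorbing the $T$-dependence into the implicit constant.

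There is no serious obstacle left at this stage: the hard analytic content has been packaged earlier into the $R$-boundedness statement of Corollary~\ref{Cor: Shen semigroup Lp bdd}, itself obtained from the $\L^s \to \L^q$ off-diagonal estimates of Corollary~\ref{Cor: system} through the abstract extrapolation Proposition~\ref{Prop: extrapolation abstract}. The maximal regularity corollary is thus essentially a citation of Weis' theorem, with the sole technical remark that the shift $\omega$ implicit in $\Ns_p = \N_p + \omega$ is harmless, since maximal regularity is stable under bounded perturbations.
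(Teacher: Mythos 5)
Your proof is correct and takes the same route as the paper: the paper offers no argument beyond citing Weis' characterization \cite[Thm.~4.2]{W}, and your write-up simply spells out the hypothesis-checking (UMD space, generator of an $R$-bounded analytic semigroup via Corollary~\ref{Cor: Shen semigroup Lp bdd}) that this citation tacitly presupposes. The closing remark about the shift $\omega$ is harmless but unnecessary, since the corollary is stated for the shifted operator $\Ns_p$ directly.
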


Corollary~\ref{Cor: Shen semigroup Lp bdd} also yields the following extension of Corollary~\ref{Cor: system} when $n=1$.

\begin{corollary}
\label{Cor: systems in d=2}
	Suppose that $n=1$ and that $\cL$ is associated with a real system on a Lipschitz domain whose coefficients are symmetric and H\"older continuous. Let $r \in (1,\infty)$ be such that Assumption~\ref{Ass: C-r} holds. Moreover, let $1 < p \leq r \leq q < \infty$ and let $\varphi\in [0,\nicefrac{\pi}{2})$. Then $\{ \e^{-z \Ns} \}_{z\in \Sec_\varphi}$ satisfies $\L^p \to \L^q$ off-diagonal estimates of order $1$.
\end{corollary}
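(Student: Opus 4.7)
The plan is to apply Theorem~\ref{Thm: ODE from commutator bounds} with the given $r$. Assumption~\ref{Ass: C-r} is part of the hypothesis, so the work splits in two parts: (a)~verify Assumption~\ref{Ass: Hyper-pqr}, i.e.\ $\L^p \to \L^r$ and $\L^r \to \L^q$ boundedness of $\{\e^{-z\Ns}\}_{z \in \Sec_\varphi}$ in the sense of Definition~\ref{Def: ODE}; (b)~transfer the conclusion of Theorem~\ref{Thm: ODE from commutator bounds}(i) from the real half-line to the complex sector.

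For (a), I would combine two facts on a slightly larger sector $\Sec_{\varphi'}$ with $\varphi < \varphi' < \nicefrac{\pi}{2}$. First, when $n = 1$ the admissible range in Corollary~\ref{Cor: system} collapses to $1 < a \le 2 \le b < \infty$, which gives $\L^a \to \L^b$ off-diagonal estimates of order~$1$ (and hence $\L^a \to \L^b$ boundedness in the sense of Definition~\ref{Def: ODE}) for $\{\e^{-z\Ns}\}_{z \in \Sec_{\varphi'}}$. Second, Corollary~\ref{Cor: Shen semigroup Lp bdd} provides $\L^s$-boundedness of the same family for every $s \in (1, \infty)$. Fix $z \in \Sec_\varphi$ and measurable $E, F \subseteq \Gamma$. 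The operator $\ind_F \e^{-z\Ns}\ind_E$ is then simultaneously bounded on every $\L^s$, $s \in (1,\infty)$, and from $\L^a$ to $\L^b$ whenever $1 < a \le 2 \le b < \infty$. Riesz--Thorin interpolation upgrades this to $\L^s \to \L^t$ boundedness for any $1 < s \le t < \infty$: if $s \le 2 \le t$ this is direct; if $s \le t \le 2$ one interpolates between $(\L^s, \L^s)$ and $(\L^s, \L^2)$; if $2 \le s \le t$ between $(\L^2, \L^t)$ and $(\L^t, \L^t)$. Computing the interpolation parameter in each case shows that the prefactor works out to $(|z| \wedge D)^{\nicefrac{n}{t} - \nicefrac{n}{s}}$, where $D = \diam(\Gamma)$, as Definition~\ref{Def: ODE} requires. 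Applying this with $(s,t) = (p, r)$ and $(s,t) = (r, q)$ secures Assumption~\ref{Ass: Hyper-pqr} for $\{\e^{-z\Ns}\}_{z\in \Sec_\varphi}$.

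For (b), I would replay the argument of Theorem~\ref{Thm: ODE from commutator bounds}(ii). Fix $|\nu| < \varphi$ and set $\Ns' = \e^{i\nu}\Ns$. Assumption~\ref{Ass: C-r} transfers verbatim because $[\Ns', g] = \e^{i\nu}[\N, g]$, and Assumption~\ref{Ass: Hyper-pqr} for $\{\e^{-t\Ns'}\}_{t > 0}$ is exactly the restriction of the sectorial bound obtained in (a) to the ray $\{t\e^{i\nu} : t > 0\}$. Theorem~\ref{Thm: ODE from commutator bounds}(i) applied to $\Ns'$ then yields $\L^p \to \L^q$ off-diagonal estimates of order~$1$ along that ray, and letting $\nu$ vary in $(-\varphi, \varphi)$ covers $\Sec_\varphi$.

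The main subtlety is bookkeeping in the interpolation step: the two endpoint estimates carry different powers of $(|z| \wedge D)$, so one has to compute the interpolation exponent carefully in order to arrive precisely at $(|z|\wedge D)^{\nicefrac{n}{t}-\nicefrac{n}{s}}$. The off-diagonal decay factor $(1 + \dist(E,F)/|z|)^{-1}$ coming from Corollary~\ref{Cor: system} also degrades to $(1 + \dist(E,F)/|z|)^{-\theta}$ under interpolation, but this is harmless because Assumption~\ref{Ass: Hyper-pqr} only asks for boundedness (order~$0$).
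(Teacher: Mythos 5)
Your proof is correct and follows essentially the same route as the paper's: reduce to Theorem~\ref{Thm: ODE from commutator bounds} with the given $r$, verify Assumption~\ref{Ass: Hyper-pqr} for the sectorial family by Riesz--Thorin interpolation between $\L^2 \to \L^t$ (or $\L^s \to \L^2$) smoothing and the $\L^s$-boundedness from Corollary~\ref{Cor: Shen semigroup Lp bdd}, then transfer to the sector via the rotation trick from the proof of Theorem~\ref{Thm: ODE from commutator bounds}(ii). The paper draws its $\L^2 \to \L^r$ endpoint directly from Proposition~\ref{Prop: hypercontractive} (and a duality argument for the sub-$2$ case), while you take it from Corollary~\ref{Cor: system}; since Corollary~\ref{Cor: system} is itself derived from Proposition~\ref{Prop: hypercontractive}, this is a cosmetic difference. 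If anything, your write-up is slightly more explicit than the paper's in flagging that Theorem~\ref{Thm: ODE from commutator bounds}(ii) as stated only covers $r=2$, so that the sectorial conclusion for general $r$ has to be obtained by re-running the proof with the rotated operator $\Ns'=\e^{i\nu}\Ns$; the paper leaves that step implicit.
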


\begin{proof}
	In the light of Corollary~\ref{Cor: system}, we can assume that $r \neq 2$ and that either $2<p \leq r \leq q$ or $p\leq r \leq q < 2$. We start with the former case.

	We appeal again to Theorem~\ref{Thm: ODE from commutator bounds}. Assumption~\ref{Ass: C-r} is fulfilled by hypothesis, so it only remains to check Assumption~\ref{Ass: Hyper-pqr}. To see $\L^p \to \L^r$ boundedness, we chose $\theta \in [0,1]$ such that $\nicefrac{1}{p} = \nicefrac{(1-\theta)}{2} + \nicefrac{\theta}{r}$. Now, on the one hand, Proposition~\ref{Prop: hypercontractive} gives $\L^2 \to \L^r$ boundedness for $\{ \e^{-z \Ns} \}_{z\in \Sec_\varphi}$. On the other hand, the semigroup family is bounded on $\L^r$ according to Corollary~\ref{Cor: Shen semigroup Lp bdd} above. Hence, Riesz--Thorin interpolation yields $\L^p \to \L^r$ boundedness. With the same argument we can also derive $\L^r \to \L^q$ boundedness, which concludes this case.

	In the second case, that is, $p\leq r \leq q < 2$, we argue similarly, but we have to combine Proposition~\ref{Prop: hypercontractive} with a duality argument.
\end{proof}

\section*{Acknowledgments}

The authors were partially supported by the ANR project RAGE: ANR-18-CE-0012-01.

\end{document}